\newtheorem{theorem}{Theorem}[section]
\newtheorem{corollary}[theorem]{Corollary}
\newtheorem{lemma}[theorem]{Lemma}
\newtheorem{proposition}[theorem]{Proposition}
\newtheorem{definition}[theorem]{Definition}
\newtheorem*{definition*}{Definition}
\begin{document}
\title{Structural theorems on the distance sets over finite fields}
\author{Doowon Koh\thanks{Department of Mathematics, Chungbuk National University, Korea. Email: koh131@chungbuk.ac.kr}\and Minh Quy Pham\thanks{Department of Mathematics, Da Lat University. Email: p.minhquydl@gmail.com}\and Thang Pham\thanks{University of Science, Vietnam National University, Hanoi, Vietnam. Email: thangpham.math@vnu.edu.vn}}

\maketitle
\begin{abstract}
Let $\mathbb{F}_q$ be a finite field of order $q$. 
Iosevich and Rudnev (2007) proved that for any set $A\subset \mathbb{F}_q^d$, if $|A|\gg q^{\frac{d+1}{2}}$, then the distance set $\Delta(A)$ contains a positive proportion of all distances. Although this result is sharp in odd dimensions, it is conjectured that the right exponent should be $\frac{d}{2}$ in even dimensions. During the last 15 years, only some improvements have been made in two dimensions, and the conjecture is still wide open in higher dimensions. To fill the gap, we need to understand more about the structures of the distance sets, the main purpose of this paper is to provide some structural theorems on the distribution of square and non-square distances.  
\end{abstract}
\section{Introduction}

Let $\mathbb{F}_q$ be a finite field of order $q$ which is a prime power. The distance between two points $x$ and $y$ in the space $\mathbb{F}_q^d$ is defined by \[||x-y||:=(x_1-y_1)^2+\cdots+(x_d-y_d)^2.\]
For $A\subset \mathbb{F}_q^d$, let $\Delta(A)$ denote the set of distances determined by pairs of points in $A$, i.e. 
\[\Delta(A)=\{||x-y||\colon x, y\in A\}.\]
Studying the magnitude of $\Delta(E)$ has been received much attention since 2007 due to its connection to the Falconer distance conjecture in Geometric Measure Theory, which says that for any compact set $A\subset \mathbb{R}^d$ of Hausdorff dimension greater than $d/2$, the distance set $\Delta(A)$ is of positive Lebesgue measure. Recent progress on this conjecture can be found in \cite{Du1, Du3, Du2, alex-fal}.

In the finite field setting, Iosevich and Rudnev \cite{IR07} proved that for $A\subset \mathbb{F}_q^d$, if $|A|\ge 4q^{\frac{d+1}{2}}$, then $\Delta(A)=\mathbb{F}_q$. The exponent $\frac{d+1}{2}$ is sharp in odd dimensions, namely, $d\equiv 3\mod 4$ and $q\equiv 1\mod 4,$ or $d\equiv 1\mod 4$, we refer the reader to \cite{HIKR10} for constructions. In even dimensions, it is conjectured that the right exponent should be $\frac{d}{2}$ which is directly in line with the Falconer distance conjecture. In a very recent paper, Murphy, Petridis, Pham, Rudnev, and Stevens \cite{MPT} obtained the exponent $\frac{5}{4}$ in the plane over prime fields, which improves the exponent $\frac{4}{3}$ by Chapman, Erdogan, Hart, Iosevich, and Koh in \cite{CEHIK10}. In particular, they showed that for $A\subset \mathbb{F}_p^2$ with $|A|\gg p^{5/4}$, then there exists $x\in A$ such that
\begin{equation}\label{54}|\Delta_x(A)|\gg p,\end{equation}
where $\Delta_x(A):=\{||x-a||\colon a\in A\}$. This matches the breakthrough dimensional threshold $\frac{5}{4}$ on the Falconer distance conjecture given by Guth, Iosevich, Ou, and Wang in \cite{alex-fal}. For $4p<|A|\le p^{5/4}$, Murphy et al. also proved that there exists $x\in A$ with
\begin{equation}\label{dim}
    |\Delta_x(A)|\gg \frac{|A|^{4/3}}{p^{2/3}}.
\end{equation}
This statement is similar to a result of Liu in \cite{liu} for the continuous setting which states that for any compact set $A\subset \mathbb{R}^2$, if the Hausdorff dimension of $A$ is greater than $1$, then 
\[\dim_H(\Delta_x(A))\ge \min \left\{\frac{4}{3}\dim_H(A)-\frac{2}{3}, ~1 \right\},\]
for some $x\in A$. We note that when the dimension of $A$ is very close to $1$, then Shmerkin \cite{hai} obtained better results, namely, 
\[\dim_H(\Delta_x(A))\ge \frac{29}{42},\]
and 
\[\dim_H(\Delta(A))\ge \frac{40}{57}.\]
Over finite fields, in higher even dimensions or $d\equiv 3\mod 4$ and $q\equiv 3\mod 4$, the best current exponent is still $\frac{d+1}{2}$ due to Iosevich and Rudnev \cite{IR07}, which is far from the conjecture $\frac{d}{2}$. In order make further progress, we need to understand more about structures of the distance sets, and in this paper, we focus on the distribution of pairs of points of square and non-square distances in a given set $A\subset \mathbb{F}_q^d$.

To state our main results, we need the following definition.
\begin{definition} Let $\eta$ denote the quadratic character of $\mathbb F_q$ with $\eta(0)=0.$ For $A\subset \mathbb F_q^d,$ we define 
$$  \mathcal{SQ}(A):=|\{(x, y)\in A\times A: \eta(||x-y||)=1\}|,$$
and 
$$ \mathcal{ZR}(A):=|\{(x, y)\in A\times A: \eta(||x-y||)=0\}|,$$

as the sets of pairs of square distances and zero-distance, respectively. 
\end{definition}

For a given $A\subset \mathbb{F}_q^d$, if we consider the zero distance as a square, then either the number of pairs of square distances or the number of pairs of non-square distances is at least $|A|^2/2$. 

In the first theorem, we show that when the size of $A$ is not small, most of the pairs of points in $A$ are of non-square distances. Precisely, we have  
\newpage
\begin{theorem}\label{mainN0} Let $A\subset \mathbb F_q^d.$ 
 \begin{enumerate}
 \item If $d\equiv 3 \mod{4}$ and $ q\equiv 3 \mod{4},$ then
 $$\mathcal{SQ}(A)+\mathcal{ZR}(A) \le \frac{|A|^2}{2} +\frac{|A|^2}{2q} 
 - \frac{|A|^2}{2 q^{\frac{d-1}{2}}} - \frac{|A|^2}{2 q^{\frac{d+1}{2}}}+ q^{\frac{d-1}{2}} |A|.$$
 \item If $d\equiv 1 \mod{4},$ or $d\equiv 3 \mod{4}$ and $q\equiv 1 \mod{4},$ then
 $$\mathcal{SQ}(A)+\mathcal{ZR}(A) \le \frac{|A|^2}{2} +\frac{|A|^2}{2q}+ \frac{q^{\frac{d+1}{2}} |A|}{2} -\frac{q^{\frac{d-1}{2}} |A|}{2}.$$
 \item If $d\equiv 2 \mod{4}$ and $ q\equiv 3 \mod{4},$ then
 $$\mathcal{SQ}(A)+\mathcal{ZR}(A) \le \frac{|A|^2}{2} +\frac{|A|^2}{2q}-\frac{|A|^2}{q^{\frac{d}{2}}}+ \frac{q^{\frac{d}{2}} |A|}{2} +\frac{q^{\frac{d-2}{2}}|A|}{2}.$$
 \item If $d\equiv 0 \mod{4},$ or $d\equiv 2 \mod{4}$ and $q\equiv 1 \mod{4},$ then
 $$ \mathcal{SQ}(A)+\mathcal{ZR}(A) \le \frac{|A|^2}{2} +\frac{|A|^2}{2q}+ \frac{q^{\frac{d}{2}} |A|}{2}-\frac{q^{\frac{d-2}{2}}|A|}{2}.$$
 \end{enumerate}
 \end{theorem}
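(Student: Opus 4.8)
The plan is to reduce the count $\mathcal{SQ}(A)+\mathcal{ZR}(A)$ to a single Fourier-analytic quantity and then read off the four cases from Gauss-sum sign patterns. Since $\eta$ takes values $\pm 1$ and $0$, the pointwise identity
\[\mathbf 1_{\square}(t)=\frac{1+\eta(t)}{2}+\frac12\,\mathbf 1_{t=0},\]
where $\mathbf 1_\square$ is the indicator of the squares together with $0$, gives
\[\mathcal{SQ}(A)+\mathcal{ZR}(A)=\frac{|A|^2}{2}+\frac{S}{2}+\frac{\mathcal{ZR}(A)}{2},\qquad S:=\sum_{x,y\in A}\eta(\|x-y\|).\]
It therefore suffices to obtain exact expressions for $S$ and for $\mathcal{ZR}(A)$. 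Fixing a nontrivial additive character $\chi$ and the Gauss sum $g(\eta)=\sum_{t}\eta(t)\chi(t)$, I would expand $\eta$ through $\eta(t)g(\eta)=\sum_s\eta(s)\chi(st)$ (valid for every $t$ because $\eta(0)=0$) to handle $S$, and use $\mathbf 1_{t=0}=q^{-1}\sum_s\chi(st)$ to handle $\mathcal{ZR}(A)$. Both reduce to the same object $T(s):=\sum_{x,y\in A}\chi(s\|x-y\|)$.

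Next I would evaluate $T(s)$ by Plancherel. The coordinatewise Gauss sum $\sum_u\chi(su^2)=\eta(s)g(\eta)$ shows that the Fourier transform of $m\mapsto\chi(s\|m\|)$ equals $\eta(s)^dg(\eta)^d\chi(-\|\xi\|/(4s))$, whence for $s\neq0$
\[T(s)=\frac{\eta(s)^dg(\eta)^d}{q^d}\sum_{\xi}\chi\!\left(-\frac{\|\xi\|}{4s}\right)|\widehat{1_A}(\xi)|^2.\]
Substituting this into the expansions of $S$ and $\mathcal{ZR}(A)$ and performing the $s$-summations, using $g(\eta)^2=\eta(-1)q$ to turn every Gauss-sum power into an explicit power of $q$ times a sign $\eta(-1)^{\ast}$, I would split the $\xi$-sum according to whether $\xi$ lies on the null cone $\{\|\xi\|=0\}$. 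The inner sums collapse cleanly: with an even exponent, $\sum_{s\neq0}\eta(s)^{k}\chi(\cdot)$ equals $q-1$ on the cone and $-1$ off it, while with an odd exponent it equals $0$ on the cone and $\eta(-\|\xi\|)g(\eta)$ off it. The result is closed formulas for $S$ and $\mathcal{ZR}(A)$; in particular $\mathcal{ZR}(A)$ acquires the main term $|A|^2/q$ from the $s=0$ contribution, and the remaining terms involve only Parseval's identity $\sum_\xi|\widehat{1_A}(\xi)|^2=q^d|A|$ together with the single nonnegative quantity $\Sigma_0:=\sum_{\xi:\,\|\xi\|=0}|\widehat{1_A}(\xi)|^2$, which satisfies $|A|^2\le\Sigma_0\le q^d|A|$.

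The four cases then correspond exactly to the sign patterns of $\eta(-1)^{(d\pm1)/2}$ in odd dimensions and of $\eta(-1)^{d/2}$ in even dimensions, which are dictated precisely by $d\bmod 4$ and $q\bmod 4$. The crucial move is not to estimate $S$ and $\mathcal{ZR}(A)$ separately but to form $\tfrac12 S+\tfrac12\mathcal{ZR}(A)$ first, so that the null-cone contributions carrying $\Sigma_0$ partly cancel; only afterwards would I insert the elementary bound $\big|\sum_{\|\xi\|\neq0}\eta(\pm\|\xi\|)|\widehat{1_A}(\xi)|^2\big|\le q^d|A|-\Sigma_0$ together with the appropriate monotone bound on $\Sigma_0$: the lower bound $\Sigma_0\ge|A|^2$ in cases (1) and (3), the upper bound $\Sigma_0\le q^d|A|$ in case (2), while in case (4) the $\Sigma_0$-terms cancel identically and no bound is needed. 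This bookkeeping is exactly what yields the sharp constants.

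The main obstacle is the null-cone quantity $\Sigma_0$. Estimating $S$ and $\mathcal{ZR}(A)$ in isolation, with only the crude bound $q^d|A|$, discards precisely the secondary terms such as $-\tfrac{|A|^2}{2q^{(d+1)/2}}$ in case (1) or $-\tfrac{q^{(d-2)/2}|A|}{2}$ in case (4); recovering them forces one to combine the two expansions before estimating and to choose, case by case, whether the decisive input is $\Sigma_0\ge|A|^2$ or $\Sigma_0\le q^d|A|$. Keeping the signs $\eta(-1)^{\ast}$ consistent across the parity cases, and checking that the null-cone cross terms cancel rather than reinforce, is the delicate heart of the argument.
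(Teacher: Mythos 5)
Your argument is correct and is essentially the proof in the paper: the identity $\mathcal{SQ}(A)+\mathcal{ZR}(A)=\bigl(\mathcal{SQ}(A)+\tfrac12\mathcal{ZR}(A)\bigr)+\tfrac12\mathcal{ZR}(A)$, the reduction via Gauss sums to the three quantities $\Sigma_0,\Sigma_+,\Sigma_-$, and the case-by-case choice of bounds (the lower bound $\Sigma_0\ge|A|^2$ in cases (1) and (3), the upper bound $\Sigma_0\le q^d|A|$ in case (2), and the cancellation of the $\Sigma_0$-terms in case (4)) match the paper's Propositions on $\mathcal{SQ}(A)+\mathcal{ZR}(A)/2$ and $\mathcal{ZR}(A)/2$ exactly. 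The only difference is presentational: you expand $\mathbf 1_{\square}=\tfrac{1+\eta}{2}+\tfrac12\delta_0$ and evaluate the resulting character sums directly, whereas the paper encodes the same identity by lifting $\mathcal{SQ}(A)+\tfrac12\mathcal{ZR}(A)$ to an incidence count with the cone $C_{d+1}\subset\mathbb F_q^{d+1}$ and quoting the known Fourier transforms of $C_{d+1}$ and $S_0$.
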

 
We say that a set $A\subset \mathbb F_q^d$ is a \textit{square distance} set if for all $x,y\in A, ~||x-y||$ is either zero or a square number of $\mathbb F_q.$ It is clear that for such a set, one has $\mathcal{SQ}(A)+ \mathcal{ZR}(A)=|A|^2$ for any square distance set $A$ in $\mathbb F_q^d.$ 
Hence, if $A\subset \mathbb F_q^d$ is a square distance set, then $|A|^2$ is dominated by the upper bound of $\mathcal{SQ}(A)+ \mathcal{ZR}(A)$ given in Theorem \ref{mainN0}. By solving the inequalities in terms of $|A|$,  we recover the following results of Iosevich, Shparlinski, and Xiong \cite{iosevich}. We also refer the reader to \cite{i1, i2, i3, iosevich} for more discussions and the motivation of this problem.
 \begin{corollary}[Iosevich-Shparlinski-Xiong, \cite{iosevich}]\label{cor:1.3}
 Let $A$  be a square distance set in $\mathbb F_q^d.$
 \begin{enumerate}
 \item If $d\equiv 3 \mod{4}$ and $ q\equiv 3 \mod{4},$ then
 $$|A| \le \frac{2 q^{\frac{d+1}{2}}}{q-1+ (q+1) q^{-\frac{(d-1)}{2}}}.$$ 

 \item If $d\equiv 1 \mod{4},$ or $d\equiv 3 \mod{4}$ and $q\equiv 1 \mod{4},$ then
 $$ |A|\le q^{\frac{d+1}{2}}.$$
 \item If $d\equiv 2 \mod{4}$ and $ q\equiv 3 \mod{4},$ then
 $$|A| \le q^{\frac{d}{2}}+ \frac{2 (q^{\frac{d}{2}}-q)}{q-1+ 2 q^{-\frac{(d-2)}{2}}}.$$ 

 \item If $d\equiv 0 \mod{4},$ or $d\equiv 2 \mod{4}$ and $q\equiv 1 \mod{4},$ then
  $$ |A|\le q^{\frac{d}{2}}.$$
 \end{enumerate}
 \end{corollary}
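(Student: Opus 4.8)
The plan is to feed the defining identity of a square distance set directly into Theorem~\ref{mainN0}. If $A\subset\mathbb F_q^d$ is a square distance set, then by definition every ordered pair $(x,y)\in A\times A$ satisfies $\eta(||x-y||)\in\{0,1\}$, so the two counts partition $A\times A$ and $\mathcal{SQ}(A)+\mathcal{ZR}(A)=|A|^2$. Substituting this equality into the four upper bounds of Theorem~\ref{mainN0} turns each case into an inequality of the shape $|A|^2\le c_1|A|^2+c_2|A|$, where $c_1$ and $c_2$ depend only on $q$ and $d$. Assuming $|A|>0$ (the empty case being trivial), I would collect the quadratic terms on one side to obtain $(1-c_1)|A|^2\le c_2|A|$ and divide by $|A|$, reducing everything to the linear inequality $(1-c_1)|A|\le c_2$.

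The remaining work is purely algebraic: solve $(1-c_1)|A|\le c_2$ for $|A|$ in each of the four cases and simplify. In cases (2) and (4) the coefficient of $|A|^2$ is exactly $\tfrac12+\tfrac1{2q}$, so $1-c_1=\tfrac12\cdot\frac{q-1}{q}$; after dividing, the factor $(q-1)$ cancels against the linear coefficient $\tfrac12 q^{(d-1)/2}(q-1)$ (resp.\ $\tfrac12 q^{(d-2)/2}(q-1)$), leaving the clean bounds $|A|\le q^{(d+1)/2}$ and $|A|\le q^{d/2}$. In case (1) the extra terms $-\tfrac{|A|^2}{2q^{(d-1)/2}}-\tfrac{|A|^2}{2q^{(d+1)/2}}$ contribute to $1-c_1$; factoring gives $1-c_1=\tfrac12\left(\frac{q-1}{q}+\frac{q+1}{q^{(d+1)/2}}\right)$, and multiplying numerator and denominator of the resulting bound by $q$ produces exactly $\frac{2q^{(d+1)/2}}{(q-1)+(q+1)q^{-(d-1)/2}}$.

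Case (3) is the one demanding the most care, since the stated bound is written in the mixed form $q^{d/2}+\frac{2(q^{d/2}-q)}{q-1+2q^{-(d-2)/2}}$ rather than as a single fraction. Here the $-\tfrac{|A|^2}{q^{d/2}}$ term gives $1-c_1=\tfrac12\left(\frac{q-1}{q}+\frac{2}{q^{d/2}}\right)$ with linear coefficient $\tfrac12 q^{(d-2)/2}(q+1)$, so dividing yields $|A|\le\frac{q^{d/2}(q+1)}{(q-1)+2q^{-(d-2)/2}}$. I would then verify the algebraic identity that this single fraction equals the stated expression, by multiplying the asserted form through by the common denominator $D=q-1+2q^{-(d-2)/2}$ and checking $q^{d/2}D+2(q^{d/2}-q)=q^{d/2}(q+1)$; the cross term $2q^{d/2}\cdot q^{-(d-2)/2}=2q$ cancels the $-2q$, collapsing the left side to $q^{d/2+1}+q^{d/2}$, as required.

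The main thing to watch is that $1-c_1>0$ in every case, which is what makes the division by $|A|$ and the direction of the resulting inequality legitimate; this holds for all prime powers $q\ge 2$ since $c_1\le\tfrac12+\tfrac1{2q}<1$ once the nonnegative correction terms are accounted for. No genuinely hard step is expected here: the only real obstacle is the bookkeeping in cases (1) and (3), where care is needed to recombine the several $q$-powers into the compact closed forms claimed in the corollary.
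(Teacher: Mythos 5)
Your proposal is correct and is exactly the argument the paper indicates: for a square distance set $\mathcal{SQ}(A)+\mathcal{ZR}(A)=|A|^2$, and substituting this into the four bounds of Theorem~\ref{mainN0} and solving the resulting inequalities for $|A|$ yields the corollary. Your algebraic simplifications in all four cases (including the recombination in cases (1) and (3) and the positivity of $1-c_1$) check out, so there is nothing to add.
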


In the next two theorems, we are interested in the distribution of non-zero square distances in a given set $A\subset \mathbb{F}_q^d$.
\begin{theorem}[odd dimensions]\label{mainNOdd} Let $A$ be a subset of $\mathbb F_q^d.$ 
 \begin{enumerate} 
\item Let $d\equiv 3 \mod{4}$ and $ q\equiv 3 \mod{4}.$ 

 If $|A|\ge (q^{(d+1)/2} +q)/( 1+ q^{-(d-1)/2}),$ then 
$$ \mathcal{SQ}(A)\le \frac{|A|^2}{2} + q^{\frac{d-1}{2}} |A| -\frac{|A|^2}{2q}- \frac{|A|^2}{2 q^{\frac{d-1}{2}}} - \frac{|A|^2}{2 q^{\frac{d+1}{2}}}.$$
If $|A|\le (q^{(d+1)/2} +q)/( 1+ q^{-(d-1)/2}),$ then 
$$ \mathcal{SQ}(A) \le \frac{|A|^2}{2} + \frac{q^{\frac{d-1}{2}} |A|}{2}
  - \frac{|A|^2}{2 q^{\frac{d-1}{2}}} -\frac{|A|}{2}.$$

  \item  Let $d\equiv 1 \mod{4},$ or $d\equiv 3 \mod{4}$ and $q\equiv 1 \mod{4},$ then 
 $$ \mathcal{SQ}(A) \le \frac{|A|^2}{2} - \frac{q^{\frac{d-1}{2}} |A|}{2}
  -\frac{|A|}{2}+ \min\left\{ \frac{q^{\frac{d+1}{2}} |A|}{2},~ \frac{q^{\frac{d-1}{2}} |A|}{2}+ \frac{|A|^2}{2},~\frac{|A|}{2}+ \frac{q^{\frac{d+1}{2}} |A|}{2}-\frac{|A|^2}{2q} \right\}.$$
 \end{enumerate}
 \end{theorem}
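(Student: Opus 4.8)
The plan is to reduce $\mathcal{SQ}(A)$ to a single quadratic-character sum and the zero-distance count, evaluate that sum exactly by Gauss sums, and then optimise over the nonnegative Fourier energies. First I would partition the $|A|^2$ ordered pairs according to the value of $\eta(\|x-y\|)\in\{1,0,-1\}$. Writing $S:=\sum_{x,y\in A}\eta(\|x-y\|)$ and letting $N_{-}(A)$ denote the number of non-square pairs, the three counts satisfy $\mathcal{SQ}(A)+\mathcal{ZR}(A)+N_{-}(A)=|A|^2$ and $\mathcal{SQ}(A)-N_{-}(A)=S$, whence
\[\mathcal{SQ}(A)=\tfrac12\bigl(|A|^2-\mathcal{ZR}(A)+S\bigr).\]
I would also record two elementary facts that will supply the non-leading estimates: the diagonal gives $\mathcal{ZR}(A)\ge|A|$, and trivially $|S|\le|A|^2$.

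Next I would compute $S$ (and, in the same way, $\mathcal{ZR}(A)$) by Fourier analysis on $\mathbb F_q^d$. Fixing a nontrivial additive character $\chi$, letting $G=\sum_t\eta(t)\chi(t)$ be the quadratic Gauss sum and $\widehat A(m)=\sum_{x\in A}\chi(-m\cdot x)$, the identity $\eta(a)=G^{-1}\sum_{s\ne0}\eta(s)\chi(as)$ together with completing the square in each coordinate turns $S$ into a sum of $d$-fold Gauss sums producing a factor $G^d$. The decisive simplification is that for $d$ odd the exponent of $\eta(s)$ that appears is $d+1$, which is even, so $\eta(s)^{d+1}=1$ and the inner $s$-sum collapses to (a multiple of) the indicator of the null cone $\{m:\|m\|=0\}$. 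Carrying this out yields, with $\epsilon:=\eta\bigl((-1)^{(d-1)/2}\bigr)$,
\[S=\epsilon\,q^{-\frac{d-1}{2}}\bigl(|A|^2+W\bigr)-\epsilon\,q^{\frac{d-1}{2}}|A|,\qquad \mathcal{ZR}(A)=\frac{|A|^2}{q}+\epsilon\,q^{-\frac{d+1}{2}}T,\]
where $W=\sum_{m\ne0,\,\|m\|=0}|\widehat A(m)|^2\ge0$ and $T=\sum_{m\ne0,\,\|m\|\ne0}\eta(\|m\|)\,|\widehat A(m)|^2$; here $\epsilon=-1$ exactly in case (1) and $\epsilon=+1$ exactly in case (2). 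As a consistency check, these formulas reproduce Theorem \ref{mainN0} upon forming $\mathcal{SQ}(A)+\mathcal{ZR}(A)=\tfrac12\bigl(|A|^2+\mathcal{ZR}(A)+S\bigr)$.

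Finally I would substitute into the displayed identity for $\mathcal{SQ}(A)$. Splitting the nonzero-norm energy into its square and non-square parts $P,N\ge0$ (so $T=P-N$) and writing $M:=q^d|A|-|A|^2$, Plancherel gives the single linear constraint $P+N+W=M$, and $\mathcal{SQ}(A)$ becomes a fixed quantity plus $\tfrac{\epsilon}{2}\,q^{-\frac{d+1}{2}}(qW-T)$. Maximising this linear functional over the simplex $\{P,N,W\ge0,\ P+N+W=M\}$ is attained at a vertex: at $P=M$ when $\epsilon=-1$ and at $W=M$ when $\epsilon=+1$, and this vertex value produces the leading inequality in each case. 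In case (1) the vertex bound is the asserted estimate for $|A|\ge q^{(d+1)/2}$, while replacing the vertex input by the elementary $\mathcal{ZR}(A)\ge|A|$ (with $W\ge0$) gives the estimate for $|A|\le q^{(d+1)/2}$; the two expressions agree precisely at $|A|=q^{(d+1)/2}$, which is exactly the stated threshold $(q^{(d+1)/2}+q)/(1+q^{-(d-1)/2})$. In case (2) the vertex bound $W=M$ is the third term of the minimum, $\mathcal{ZR}(A)\ge|A|$ together with $W\le M$ gives the first, and $\mathcal{ZR}(A)\ge|A|$ together with $|S|\le|A|^2$ gives the second, so the minimum of the three is a valid upper bound.

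The main obstacle is the exact Gauss-sum evaluation, and above all pinning down the sign $\epsilon$ uniformly across the residue classes of $d$ and $q$, since this is precisely where the two-case split originates; one then has to check that the simplex vertices reproduce the exact coefficients $q^{\pm(d-1)/2}$. I would also stress that one cannot shortcut this by merely subtracting a crude lower bound for $\mathcal{ZR}(A)$ from Theorem \ref{mainN0}: that route replaces the sharp $-|A|/2$ by a weaker $-|A|^2/(2q)$-type term, so keeping $W$ and $\mathcal{ZR}(A)$ separate throughout the computation is essential.
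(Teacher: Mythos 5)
Your argument is correct and is essentially the paper's proof in different clothing: your quantity $\tfrac12\bigl(|A|^2+S\bigr)$ is exactly the paper's $\mathcal{SQ}(A)+\mathcal{ZR}(A)/2$, your exact formulas for $S$ and $\mathcal{ZR}(A)$ (including the sign $\epsilon$) coincide with Propositions \ref{pro4.2} and \ref{pro4.4}, and your simplex vertices $P=M$ and $W=M$ together with $\mathcal{ZR}(A)\ge|A|$ and $|S|\le|A|^2$ are precisely the trivial bounds on $\Omega^{\pm}(A)$, $\Omega^0(A)$ and $\mathcal{ZR}(A)$ that the paper combines (and your observation that the stated threshold equals the crossing point of the two bounds, indeed $q^{(d+1)/2}$, is accurate). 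The only cosmetic differences are that the paper obtains the formula for $S$ by lifting to the cone $C_{d+1}\subset\mathbb F_q^{d+1}$ rather than by expanding $\eta$ directly as a Gauss sum, and that it presents the final optimization as a short list of estimates rather than as a linear program over the energy simplex.
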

 \begin{theorem}[even dimensions] \label{mainNEven} Let $A$ be a subset of $\mathbb F_q^d.$ 
 
 \begin{enumerate}
 \item
 If  $d\equiv 2 \mod{4}$ and $ q\equiv 3 \mod{4},$ then
$$ \mathcal{SQ}(A)\le \frac{|A|^2}{2} + \frac{q^{\frac{d}{2}} |A|}{2} -\frac{|A|^2}{2q}- \frac{q^{\frac{d-2}{2}}|A|}{2}.$$
  
\item Let $d\equiv 0 \mod{4},$ or $d\equiv 2 \mod{4}$ and $q\equiv 1 \mod{4}.$ 

If $|A|\ge (q^{d/2} +q)/( 1+ q^{-(d-2)/2}),$ then 
$$ \mathcal{SQ}(A)\le \frac{|A|^2}{2} + \frac{q^{\frac{d}{2}} |A|}{2}-\frac{|A|^2}{q^{\frac{d}{2}}} -\frac{|A|^2}{2q}+ \frac{q^{\frac{d-2}{2}}|A|}{2}.$$
If $|A|\le (q^{d/2} +q)/( 1+q^{-(d-2)/2}),$ then
$$ \mathcal{SQ}(A) \le \frac{|A|^2}{2} + \frac{q^{\frac{d}{2}} |A|}{2}
  - \frac{|A|^2}{2 q^{\frac{d}{2}}} -\frac{|A|}{2}.$$
\end{enumerate}
\end{theorem}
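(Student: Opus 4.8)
The starting point is the exact identity
\[
\mathcal{SQ}(A)=\frac12\sum_{x,y\in A}\bigl(\eta(\|x-y\|)^2+\eta(\|x-y\|)\bigr)=\frac12\Bigl(|A|^2-\mathcal{ZR}(A)+T_1(A)\Bigr),
\]
where $T_1(A):=\sum_{x,y\in A}\eta(\|x-y\|)$ and I have used $\sum_{x,y}\eta(\|x-y\|)^2=|A|^2-\mathcal{ZR}(A)$. Hence an upper bound for $\mathcal{SQ}(A)$ follows from an upper bound for $T_1(A)$ together with a \emph{lower} bound for $\mathcal{ZR}(A)$; this is the mirror image of the input to Theorem~\ref{mainN0}, which bounds $\mathcal{SQ}(A)+\mathcal{ZR}(A)=\tfrac12(|A|^2+\mathcal{ZR}(A)+T_1(A))$ and therefore uses upper bounds on both quantities. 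The same Fourier/Gauss-sum evaluations feed both theorems.

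For the analytic input I would set $\widehat A(\xi)=\sum_{x\in A}\chi(\xi\cdot x)$ for a fixed nontrivial additive character $\chi$, so that $\widehat A(0)=|A|$ and $\sum_\xi|\widehat A(\xi)|^2=q^d|A|$ by Plancherel. Writing $\eta$ through its Gauss sum and using the one-dimensional evaluation $\sum_{v}\chi(sv^2)=\eta(s)g$, $g^2=\eta(-1)q$, one diagonalises the quadratic form $\|\cdot\|$ coordinate by coordinate and obtains, in even dimension, exact formulas of the form
\[
T_1(A)=\varepsilon\,q^{-d/2}\!\!\sum_{a\neq 0}|\widehat A(a)|^2\,\eta(\|a\|),
\qquad
\mathcal{ZR}(A)=\frac{|A|^2}{q}+\varepsilon'\,q^{-d/2}\!\!\sum_{\|a\|=0}\!\!|\widehat A(a)|^2-\varepsilon'\,q^{\frac{d}{2}-1}|A|,
\]
where $\varepsilon,\varepsilon'\in\{\pm1\}$ are explicit powers of $\eta(-1)^{d/2}$. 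Equivalently $\mathcal{SQ}(A)=q^{-d}\sum_a\widehat S(a)|\widehat A(a)|^2$, where $S$ is the indicator of $\{z:\eta(\|z\|)=1\}$, and the Gauss-sum computation gives $\widehat S(a)$ exactly on each of the three frequency classes $\|a\|=0$, $\|a\|$ a nonzero square, and $\|a\|$ a nonsquare. It is precisely the sign $\eta(-1)^{d/2}$ here that splits the argument into the residue cases of $d$ and $q$ modulo $4$.

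Two complementary estimates then finish the proof. For small $|A|$ I would argue crudely: $\mathcal{SQ}(A)\le\frac12\bigl(|A|^2-\mathcal{ZR}(A)+|T_1(A)|\bigr)$, bound $\mathcal{ZR}(A)\ge|A|$ using the diagonal pairs $x=y$, and bound $|T_1(A)|\le q^{-d/2}\sum_{a\neq 0}|\widehat A(a)|^2=q^{-d/2}(q^{d}|A|-|A|^2)$ by $|\eta|\le 1$ and Plancherel; this yields the clean bound $\tfrac12(1-q^{-d/2})|A|(|A|+q^{d/2})$, which is exactly the small-range statement in part (2). For large $|A|$ I would instead retain the eigenvalue structure and discard the negative eigenvalues: with $\lambda^{+}:=\max_{a\neq 0}\widehat S(a)$ one has $\sum_{a\neq0}\widehat S(a)|\widehat A(a)|^2\le\lambda^{+}(q^{d}|A|-|A|^2)$, and the exact value $\lambda^{+}=\tfrac12 q^{\frac{d-2}{2}}(q+\eta(-1)^{d/2})$ produces the other bound, which is sharper once $|A|$ is large. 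Comparing the two expressions, their crossover sits at $|A|=(q^{d/2}+q)/(1+q^{-(d-2)/2})=q^{d/2}$, giving the two regimes in part (2); in part (1), where $\eta(-1)^{d/2}=-1$, the eigenvalue bound turns out to be the smaller of the two for every $|A|$, so only one bound is recorded.

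The main obstacle is the exact Gauss-sum evaluation behind the displayed formulas: one must keep precise track of the factors $g^d$ and $\eta(-1)^{d/2}$, of the contribution of the isotropic frequencies $\{\|a\|=0\}$ (which controls $\mathcal{ZR}(A)$), and of which of the three frequency classes realises $\lambda^{+}=\max_{a\neq0}\widehat S(a)$ — the maximiser, and hence the constant, changes with $\eta(-1)^{d/2}$. This bookkeeping is what forces the case division and pins down every constant; once it is in place, deriving the two bounds and locating their crossover is routine algebra.
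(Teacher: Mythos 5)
Your proposal is correct and is in substance the paper's own argument: the identity $\mathcal{SQ}(A)=\tfrac12\bigl(|A|^2-\mathcal{ZR}(A)+T_1(A)\bigr)$ is exactly the paper's splitting into $\bigl(\mathcal{SQ}(A)+\mathcal{ZR}(A)/2\bigr)-\mathcal{ZR}(A)/2$, it is fed by the same Gauss-sum evaluations, and you use the same pair of complementary estimates (Plancherel minus the zero frequency together with $\mathcal{ZR}(A)\ge|A|$ for small $A$; retaining the isotropic contribution for large $A$) with the same crossover $(q^{d/2}+q)/(1+q^{-(d-2)/2})=q^{d/2}$. The only difference is one of packaging: you bound $q^{-d}\sum_{a\ne 0}\widehat{S}(a)|\widehat{A}(a)|^2$ by the largest nontrivial Fourier coefficient $\lambda^{+}=\tfrac12 q^{(d-2)/2}\bigl(q+\eta(-1)^{d/2}\bigr)$ of the set $\{z:\eta(||z||)=1\}$, computed directly in $\mathbb F_q^d$, whereas the paper reaches the identical numbers by lifting to the cone $C_{d+1}\subset\mathbb F_q^{d+1}$ and bookkeeping with $\Omega^{+},\Omega^{-},\Omega^{0}$; your claimed values of $\lambda^{+}$ and of $\widehat{S}(0)=\tfrac{q-1}{2}\bigl(q^{d-1}-\eta(-1)^{d/2}q^{(d-2)/2}\bigr)$ do reproduce all the displayed bounds exactly, including the single bound of part (1) being dominant for all $|A|$.
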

It has been proved in \cite{iosevich} that Corollary \ref{cor:1.3} (2) and (4) are sharp, so the upper bounds of Theorem \ref{mainN0} (2) and (4) are also best possible. It follows from Corollary \ref{cor:1.3} that if the set of non-zero distances is fully contained in the group of squares, then the size of $A$ can not be bigger than a certain threshold. After posting this paper to Arxiv, Prof. Igor Shparlinski raised the question of studying the case for an arbitrary subgroup of $\mathbb{F}_q\setminus \{0\}$. We hope to address this question in a sequel paper. 



\section{Discrete Fourier analysis and preliminary lemmas}
In this section, we recall notations from Discrete Fourier analysis and properties. 
Let $f$ be a complex valued function on $\mathbb F_q^n,$ then its Fourier transform denoted by $\widehat{f}$ is defined by
$$ \widehat{f}(m)=q^{-n} \sum_{x\in \mathbb F_q^n} \chi(-m\cdot x) f(x),$$ 
here $\chi$ is the principal additive character of $\mathbb F_q.$ 

With this definition, the Fourier inversion theorem reads as
$$ f(x)=\sum_{m\in \mathbb F_q^n} \chi(m\cdot x) \widehat{f}(m),$$
we also have the Plancherel theorem
$$ \sum_{m\in \mathbb F_q^n} |\widehat{f}(m)|^2 =q^{-n}\sum_{x\in \mathbb F_q^n} |f(x)|^2,$$
which can be proved easily by using the following orthogonal property
$$ \sum_{\alpha\in \mathbb F_q^n} \chi(\beta\cdot \alpha)
=\left\{\begin{array}{ll} 0\quad &\mbox{if}\quad \beta\ne (0,\ldots, 0),\\
q^n\quad &\mbox{if}\quad \beta=(0,\ldots,0). \end{array}\right.$$

In this paper, the quadratic character of $\mathbb F_q$ is denoted by $\eta$ with a convention that $\eta(0)=0.$ For $a\in \mathbb F_q^*,$ the Gauss sum $G_a$ is defined by
$$G_a=\sum_{s\in \mathbb F_q^*}\eta(s) \chi(as),$$
which can be rewritten as
$$ G_a=\sum_{s\in \mathbb F_q} \chi(as^2)=\eta(a) G_1.$$
It is not hard to see that $|G_a|=\sqrt{q}$. In the following lemma, we recall the explicit form of the Gauss sum $G_1$ from \cite[Theorem 5.15]{LN97}.
\begin{lemma}\label{ExplicitGauss}
Let $\mathbb F_q$ be a finite field with $ q= p^{\ell},$ where $p$ is an odd prime and $\ell \in {\mathbb N}.$
Then we have
$$G_1= \left\{\begin{array}{ll}  {(-1)}^{\ell-1} q^{\frac{1}{2}} \quad &\mbox{if} \quad p \equiv 1 \mod 4 \\
{(-1)}^{\ell-1} i^\ell q^{\frac{1}{2}} \quad &\mbox{if} \quad p\equiv 3 \mod 4.\end{array}\right. $$
\end{lemma}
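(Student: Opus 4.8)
The plan is to split the lemma into its two logically distinct components: the magnitude $|G_1|=q^{1/2}$, which is elementary and insensitive to $p\bmod 4$, and the precise argument (the sign together with the power of $i$), which is the classical Gauss sign determination and is where all the real difficulty sits. In addition I would reduce the extension-field case $q=p^{\ell}$ to the prime field by a lifting argument, so that the delicate sign computation only has to be carried out over $\mathbb F_p$.

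First I would record the magnitude. Writing $|G_1|^2=\sum_{s,t\in\mathbb F_q^*}\eta(s)\eta(t)\chi(s-t)$ and substituting $s=tu$ (so that $u$ ranges over $\mathbb F_q^*$ for each fixed $t\ne 0$), the identity $\eta(t^2u)=\eta(u)$ removes the dependence of the character values on $t$, and the orthogonality relations $\sum_{t\ne 0}\chi(t(u-1))=q\,[u=1]-1$ and $\sum_{u\ne 0}\eta(u)=0$ collapse the double sum to $q\,\eta(1)=q$. This uses nothing about the characteristic and accounts for the factor $q^{1/2}$ appearing in all cases.

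Next I would pass from $\mathbb F_{p^{\ell}}$ to $\mathbb F_p$ via the Hasse--Davenport lifting relation. The key observation is that the quadratic character $\eta$ of $\mathbb F_{p^{\ell}}$ equals the lift $\eta_p\circ N$ of the quadratic character $\eta_p$ of $\mathbb F_p$ along the norm map $N\colon\mathbb F_{p^{\ell}}\to\mathbb F_p$, because $N$ carries a primitive element to a primitive element, so the two characters agree on a generator of $\mathbb F_{p^{\ell}}^*$; likewise $\chi$, being the canonical additive character, is the trace-lift of the canonical character of $\mathbb F_p$, which is exactly the hypothesis Hasse--Davenport requires. The relation then gives $-G_1=(-G_1^{(p)})^{\ell}$, i.e. $G_1=(-1)^{\ell-1}(G_1^{(p)})^{\ell}$, where $G_1^{(p)}$ is the Gauss sum over $\mathbb F_p$. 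Substituting the prime-field values $G_1^{(p)}=\sqrt p$ (for $p\equiv 1\bmod 4$) and $G_1^{(p)}=i\sqrt p$ (for $p\equiv 3\bmod 4$) into this formula reproduces the two lines of the statement verbatim, since $(-1)^{\ell-1}(i\sqrt p)^{\ell}=(-1)^{\ell-1}i^{\ell}q^{1/2}$.

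It remains to determine the prime-field sign, and this is the main obstacle. One first checks easily that $\overline{G_1^{(p)}}=\eta_p(-1)\,G_1^{(p)}=(-1)^{(p-1)/2}G_1^{(p)}$, so $G_1^{(p)}$ is real when $p\equiv 1$ and purely imaginary when $p\equiv 3$; combined with $|G_1^{(p)}|=\sqrt p$ this already forces $G_1^{(p)}\in\{\pm\sqrt p\}$ or $\{\pm i\sqrt p\}$ respectively. To break the residual sign ambiguity I would use Schur's spectral argument: form the $p\times p$ matrix $F=(\zeta^{jk})_{0\le j,k\le p-1}$ with $\zeta=\chi(1)$, note that $\mathrm{tr}\,F=\sum_{j}\zeta^{j^2}=G_1^{(p)}$, and observe $F^2=pP$, where $P$ is the reversal permutation matrix $e_j\mapsto e_{-j}$. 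Since $P$ is an involution with eigenvalue multiplicities $(p+1)/2$ and $(p-1)/2$, the eigenvalues of $F$ lie in $\{\pm\sqrt p,\pm i\sqrt p\}$ with tightly constrained multiplicities, and evaluating $\det F$ through its Vandermonde form $\prod_{j<k}(\zeta^{k}-\zeta^{j})$ and matching it against the product of the eigenvalues pins down the overall sign. No purely formal rearrangement of the sum $G_1^{(p)}$ can achieve this: the symmetry between $+\sqrt p$ and $-\sqrt p$ is only broken by importing an external input, either Schur's determinant computation or an analytic Poisson-summation (theta-function) argument in the style of Dirichlet. Since the paper cites \cite[Theorem 5.15]{LN97} for this lemma, in practice I would present the magnitude and the Hasse--Davenport reduction in full and defer the final sign determination to that reference, flagging it as the classical evaluation due to Gauss.
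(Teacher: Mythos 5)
Your proposal is correct, and it is worth noting that the paper itself offers no proof of this lemma at all: it is simply recalled from \cite[Theorem 5.15]{LN97}. Your outline reproduces the standard route taken in that reference --- the elementary computation of $|G_1|=q^{1/2}$, the reduction from $\mathbb F_{p^\ell}$ to $\mathbb F_p$ via the Davenport--Hasse relation $-G_1=(-G_1^{(p)})^\ell$ (your verification that $\eta=\eta_p\circ N$ and that the canonical additive character is the trace-lift is exactly what is needed to apply it), and the substitution of the prime-field values $G_1^{(p)}=\sqrt p$ or $i\sqrt p$. You correctly isolate the determination of the sign of $G_1^{(p)}$ as the one genuinely nontrivial input, sketch Schur's eigenvalue/determinant argument accurately, and defer its full execution to the literature --- which is no less than the paper does. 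The only caution is terminological: the paper calls $\chi$ the ``principal'' additive character, but your reading of it as the canonical (trace-induced, nontrivial) character is the intended one, since the Gauss sum would vanish otherwise.
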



\begin{corollary} \label{Corm}Let $n$ be a positive integer. 
\begin{enumerate}
\item
If $n\equiv 0 \mod {4}$ and $q\equiv 3 \mod{4},$ then 
$\eta(-1)G_1^n= -q^{\frac{n}{2}}.$
\item If $n\equiv 2 \mod{4},$ or $n\equiv 0 \mod{4}$ and $q\equiv 1 \mod{4}$, then 
$\eta(-1)G_1^n= q^{\frac{n}{2}}.$
\item If $n\equiv 2 \mod{4}$ and $q\equiv 3 \mod{4}$, then 
$ G_1^n= -q^{\frac{n}{2}}.$
\item
If $n\equiv 0 \mod {4},$ or $n\equiv 2 \mod{4}$ and $q\equiv 1 \mod{4},$ then 
$G_1^n= q^{\frac{n}{2}}.$
\end{enumerate}
\end{corollary}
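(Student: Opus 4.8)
The plan is to reduce all four statements to a single clean identity for the \emph{square} of the Gauss sum and then read off the four cases by tracking the residue of $n$ modulo $4$ together with the sign of $\eta(-1)$. The starting observation is that $\eta(-1)=(-1)^{(q-1)/2}$, so $\eta(-1)=1$ when $q\equiv 1\bmod 4$ and $\eta(-1)=-1$ when $q\equiv 3\bmod 4$; this is precisely the dichotomy separating the four items.

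Next I would establish the identity $G_1^2=\eta(-1)\,q$, which I will use in place of the full piecewise formula. It follows quickly from Lemma \ref{ExplicitGauss}: when $p\equiv 1\bmod 4$ we have $q\equiv 1\bmod 4$ and squaring $G_1=(-1)^{\ell-1}q^{1/2}$ gives $G_1^2=q=\eta(-1)q$; when $p\equiv 3\bmod 4$, squaring $G_1=(-1)^{\ell-1}i^\ell q^{1/2}$ gives $G_1^2=i^{2\ell}q=(-1)^\ell q$, and since $p\equiv -1\bmod 4$ forces $q=p^\ell\equiv(-1)^\ell\bmod 4$, we get $(-1)^\ell=\eta(-1)$ and hence $G_1^2=\eta(-1)q$ again.

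With this identity, and noting that every case in the statement has $n$ even, I would write $n=2m$ and compute $G_1^n=(G_1^2)^m=\eta(-1)^m\,q^{n/2}$. The factor $\eta(-1)^m$ depends only on the parity of $m$: if $n\equiv 0\bmod 4$ then $m$ is even and $G_1^n=q^{n/2}$, whereas if $n\equiv 2\bmod 4$ then $m$ is odd and $G_1^n=\eta(-1)\,q^{n/2}$. Substituting these two formulas, together with the value of $\eta(-1)$, into each item then finishes everything in one line apiece: item (1) combines $n\equiv 0\bmod 4$ (so $G_1^n=q^{n/2}$) with $\eta(-1)=-1$; item (3) combines $n\equiv 2\bmod 4$ (so $G_1^n=\eta(-1)q^{n/2}$) with $\eta(-1)=-1$; item (2) uses $\eta(-1)G_1^n=\eta(-1)^2 q^{n/2}=q^{n/2}$ in the $n\equiv 2\bmod 4$ subcase and $\eta(-1)=1$ in the $n\equiv 0\bmod 4,\ q\equiv 1\bmod 4$ subcase; and item (4) uses $G_1^n=q^{n/2}$ directly when $n\equiv 0\bmod 4$ and $\eta(-1)=1$ when $n\equiv 2\bmod 4,\ q\equiv 1\bmod 4$.

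There is no deep obstacle here; the only care required is the bookkeeping of the sign $\eta(-1)$, and in particular the fact that for $p\equiv 3\bmod 4$ the residue class of $q$ modulo $4$ flips with the parity of $\ell$. Isolating the identity $G_1^2=\eta(-1)\,q$ is exactly what lets me avoid invoking the piecewise formula of Lemma \ref{ExplicitGauss} repeatedly, after which each case is a routine parity computation.
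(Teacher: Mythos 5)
Your proof is correct, and it is essentially the paper's (implicit) argument: the corollary is stated as a direct consequence of Lemma \ref{ExplicitGauss}, and you deduce it from that same lemma, merely packaging the case analysis through the clean intermediate identity $G_1^2=\eta(-1)q$ together with $\eta(-1)=(-1)^{(q-1)/2}$. The bookkeeping in all four cases checks out, including the subtle point that for $p\equiv 3\bmod 4$ the class of $q$ modulo $4$ depends on the parity of $\ell$.
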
 

The following formula, which can be proved by completing the square and using a change of variables, will be used in our computations below
\begin{equation}\label{ComSqu}  
 \sum_{s\in \mathbb F_q} \chi(as^2+bs)= \eta(a)G_1 \chi\left(\frac{b^2}{-4a}\right),\end{equation}
where $a\in \mathbb F_q^*$ and $b\in \mathbb F_q.$

Let $P\in \mathbb F_q[x_1,\ldots,x_n]$ be a polynomial, we define a variety $V_P$ as
$$V_P:=\{x\in \mathbb F_q^n: P(x)=0\}.$$
In the next lemma, we count the number of pairs of points $(x, y)$ in a given set in $\mathbb{F}_q^n$ such that $x-y\in V_P$. 

\begin{lemma}\label{AlexF} Let $E$ be a set in $\mathbb{F}_q^n$. Then the number of pairs $(x,y)\in E\times E$ such that $P(x-y)=0$ is equal to 
$$ q^{2n}\sum_{m\in \mathbb F_q^{n}} \widehat{V_P}(m)|\widehat{E}(m)|^2.$$
\end{lemma}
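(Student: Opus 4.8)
The plan is to run a direct Fourier-analytic computation, identifying the set $E$ with its indicator function and likewise writing $V_P(x)$ for the indicator of the variety $V_P$. First I would rewrite the quantity to be computed as a double sum over $\mathbb F_q^n\times\mathbb F_q^n$:
\[
N := \#\{(x,y)\in E\times E : P(x-y)=0\} = \sum_{x,y\in \mathbb F_q^n} E(x)\,E(y)\,V_P(x-y),
\]
since the summand equals $1$ exactly when both $x,y\in E$ and $x-y$ lies on $V_P$, and $0$ otherwise.

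The key step is to expand $V_P(x-y)$ by the Fourier inversion theorem recalled above, $V_P(x-y)=\sum_{m}\chi\bigl(m\cdot(x-y)\bigr)\,\widehat{V_P}(m)$, and then interchange the order of summation so that the $x$- and $y$-sums decouple:
\[
N = \sum_{m\in\mathbb F_q^n} \widehat{V_P}(m)\left(\sum_{x}E(x)\chi(m\cdot x)\right)\left(\sum_{y}E(y)\chi(-m\cdot y)\right).
\]
By the very definition of the Fourier transform, the two inner sums are $q^n\widehat{E}(-m)$ and $q^n\widehat{E}(m)$ respectively, which yields $N=q^{2n}\sum_m \widehat{V_P}(m)\,\widehat{E}(-m)\,\widehat{E}(m)$.

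Finally I would invoke the fact that $E$ is real-valued (indeed $\{0,1\}$-valued), so that together with $\overline{\chi(t)}=\chi(-t)$ this forces the conjugate-symmetry relation $\widehat{E}(-m)=\overline{\widehat{E}(m)}$; hence $\widehat{E}(-m)\,\widehat{E}(m)=|\widehat{E}(m)|^2$ and the claimed identity follows. The argument is entirely routine, so I do not expect a genuine obstacle; the only point requiring care is tracking the signs inside the characters through the decoupling step, since a misplaced sign there is precisely what would spoil the clean conjugate-symmetry and leave $\widehat{E}(m)^2$ in place of the correct $|\widehat{E}(m)|^2$.
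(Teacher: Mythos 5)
Your proposal is correct and follows essentially the same route as the paper: expand $V_P(x-y)$ by Fourier inversion, decouple the $x$- and $y$-sums into $q^n\widehat{E}(-m)$ and $q^n\widehat{E}(m)$, and use the conjugate symmetry $\widehat{E}(-m)=\overline{\widehat{E}(m)}$ for the real-valued indicator $E$. You merely spell out the final decoupling step that the paper compresses into the phrase ``follows directly from the orthogonal property.''
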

\begin{proof}
Let $N$ be the number of pairs $(x,y)$ such that $P(x-y)=0$, i.e.
$$ N=\sum_{x,y\in E} V_P(x-y).$$
By identifying the variety $V_P$ with the indicator function $1_{V_P}$ and using the Fourier inversion formula for the function $V_P(x-y)$ one has
$$ N=\sum_{x,y\in \mathbb F_q^n} E(x)E(y) \sum_{m\in \mathbb F_q^n} \widehat{V_P}(m) \chi(m\cdot (x-y)).$$ 
Then the lemma follows directly from the orthogonal property. 
\end{proof}

As we shall see  in the proofs of our main results, given $A\subset \mathbb F_q^d,$ we shall relate the value $\mathcal{SQ}(A) + \mathcal{ZR}(A)$ to the Fourier decay of the cone in $\mathbb F_q^{d+1}.$
For a positive integer $n\ge 2,$ we recall that the cone $C_n$ is defined as
\begin{equation}\label{conDe}C_n:=\{x\in \mathbb F_q^n: x_n^2=x_1^2+x_2^2+\cdots+ x_{n-1}^2\}.\end{equation}

\begin{definition}
Let $x=(x_1,x_2,\ldots, x_n) \in \mathbb F_q^n.$ We define 
$$ ||x||_{C_n}= x_1^2+x_2^2+\cdots+ x_{n-1}^2 - x_n^2.$$
\end{definition}
Using the notation $|| \cdot ||_{C_n}$,  the cone $C_n$ can be written by
$$ C_n=\{x\in \mathbb F_q^n: ||x||_{C_n}=0\}.$$
The next lemma contains the Fourier transform formula for $C_n$ which was computed explicitly in \cite{IMRN, KPP}.
\begin{lemma}\label{FTForm} Let $C_n$ be the cone in $\mathbb F_q^n$ defined in \eqref{conDe}.
Then, for any $m\in \mathbb F_q^n,$ we have
\begin{equation}\label{FFC}\widehat{C_n}(m)=q^{-1}\delta_{0}(m) +q^{-n-1} \eta(-1) G_1^{n} \sum_{s\ne 0} \eta^n(s) \chi\left( \frac{||m||_{C_n}}{-4s}\right).\end{equation}
%
%
%
 \end{lemma}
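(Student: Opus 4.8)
The plan is to compute $\widehat{C_n}(m)$ directly from the definition, replacing the indicator of the cone by a character sum and then evaluating the resulting Gauss-type sums coordinate by coordinate. Starting from
$$\widehat{C_n}(m)=q^{-n}\sum_{x\in\mathbb F_q^n}\chi(-m\cdot x)\,1_{C_n}(x),$$
I would first rewrite the indicator function of the zero set $\{||x||_{C_n}=0\}$ using the orthogonality relation in the form
$$1_{C_n}(x)=\frac{1}{q}\sum_{s\in\mathbb F_q}\chi\bigl(s\,||x||_{C_n}\bigr),$$
which holds because the inner sum equals $q$ when $||x||_{C_n}=0$ and vanishes otherwise. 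Substituting this in and interchanging the order of summation gives
$$\widehat{C_n}(m)=q^{-n-1}\sum_{s\in\mathbb F_q}\sum_{x\in\mathbb F_q^n}\chi\bigl(s\,||x||_{C_n}-m\cdot x\bigr).$$

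Next I would isolate the contribution of $s=0$. For $s=0$ the phase reduces to $\chi(-m\cdot x)$, and summing over $x$ produces $q^{n}\delta_0(m)$ by orthogonality, so this term contributes exactly $q^{-1}\delta_0(m)$, the first piece of the claimed formula. For $s\neq 0$, the key observation is that both $||x||_{C_n}=x_1^2+\cdots+x_{n-1}^2-x_n^2$ and $m\cdot x=\sum_j m_jx_j$ split as sums over the coordinates, so the exponential sum factors as a product of $n$ one-variable quadratic Gauss sums:
$$\sum_{x\in\mathbb F_q^n}\chi\bigl(s\,||x||_{C_n}-m\cdot x\bigr)=\Bigl(\prod_{j=1}^{n-1}\sum_{x_j}\chi(sx_j^2-m_jx_j)\Bigr)\Bigl(\sum_{x_n}\chi(-sx_n^2-m_nx_n)\Bigr).$$
Each inner sum is then evaluated by the completing-the-square identity \eqref{ComSqu}.

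Applying \eqref{ComSqu} to the first $n-1$ factors with $a=s$ yields $\eta(s)G_1\chi\bigl(m_j^2/(-4s)\bigr)$, while the last factor with $a=-s$ yields $\eta(-s)G_1\chi\bigl(m_n^2/(4s)\bigr)$. Collecting the constants, the product of quadratic characters is $\eta(s)^{n-1}\eta(-s)=\eta(-1)\eta^n(s)$ and the product of Gauss sums is $G_1^n$. The main point requiring care is the sign bookkeeping in the last coordinate: writing $m_n^2/(4s)=-m_n^2/(-4s)$ lets all $n$ phases combine into $\chi\bigl((m_1^2+\cdots+m_{n-1}^2-m_n^2)/(-4s)\bigr)=\chi\bigl(||m||_{C_n}/(-4s)\bigr)$. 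Multiplying the $s\neq 0$ contribution by the prefactor $q^{-n-1}$ then gives exactly $q^{-n-1}\eta(-1)G_1^n\sum_{s\neq0}\eta^n(s)\chi\bigl(||m||_{C_n}/(-4s)\bigr)$, which together with the $s=0$ term completes the formula. The only genuine subtlety in the whole argument is tracking these signs and the $\eta(-1)$ factor arising from the mixed signature of the quadratic form defining the cone; everything else is a routine application of \eqref{ComSqu} and orthogonality.
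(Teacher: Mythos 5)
Your computation is correct, and it is exactly the argument the paper has in mind: the paper itself does not reprove this lemma but cites \cite{IMRN, KPP} for it, and later invokes ``the same argument'' to justify the analogous formula \eqref{S0Fourier} for $\widehat{S_0}$. Your steps all check out --- the orthogonality expansion of $1_{C_n}$, the $s=0$ term giving $q^{-1}\delta_0(m)$, the factorization into $n$ one-variable sums evaluated by \eqref{ComSqu}, and in particular the sign bookkeeping in the last coordinate, where $\eta(s)^{n-1}\eta(-s)=\eta(-1)\eta^n(s)$ and $\chi\bigl(m_n^2/(4s)\bigr)=\chi\bigl(-m_n^2/(-4s)\bigr)$ combine to produce the factor $\eta(-1)G_1^n$ and the phase $\chi\bigl(||m||_{C_n}/(-4s)\bigr)$. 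Nothing is missing.
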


 \section{Preliminary settings on estimates of $\mathcal{SQ}(A)$ and $\mathcal{SQ}(A)+ \mathcal{ZR}(A)$} 
 For efficient estimates, we decompose $\mathcal{SQ}(A)$ as the sum of $\mathcal{SQ}(A) + \mathcal{ZR}(A)/2$ and $-\mathcal{ZR}(A)/2,$ and we also consider
 $\mathcal{SQ}(A)+ \mathcal{ZR}(A)$ as the sum of $\mathcal{SQ}(A) + \mathcal{ZR}(A)/2$ and $\mathcal{ZR}(A)/2.$ Therefore, we begin by doing delicate estimates on both  $\mathcal{SQ}(A) + \mathcal{ZR}(A)/2$ and $\mathcal{ZR}(A)/2.$ 
 
 \subsection{$\mathcal{SQ}(A)+ \mathcal{ZR}(A)/2$ value}
 Let $A\subset \mathbb F_q^d.$ 
 Recall that $\mathcal{ZR}(A)$ and $\mathcal{SQ}(A)$ denote the numbers of pairs $(x, y)\in A\times A$ such that $||x- y||$ is zero and is a square number in $\mathbb F_q^*$, respectively; namely $||x- y||=0$ and $||x-y||=r^2$ for some $r\in \mathbb F_q^*.$ Since $ r^2=(-r)^2$ for $r\in \mathbb F_q^*,$  we can write
 $$ \mathcal{SQ}(A)= \frac{1}{2} \sum_{r\in \mathbb F_q^*} \sum_{x, y\in A: ||x-y||=r^2} 1.$$
 Since $\mathcal{ZR}(A)=\sum\limits_{x,y\in A: ||x-y||=0} 1$,  we have
 $$ \mathcal{SQ}(A)+ \frac{\mathcal{ZR}(A)}{2} =\frac{1}{2} \sum_{r\in \mathbb F_q} \sum_{x, y\in A: ||x-y||=r^2} 1.$$

 Now notice that for each fixed $r\in \mathbb F_q,$ there are exactly $q$ pairs $(s,s')\in \mathbb F_q\times \mathbb F_q$ such that 
 $r=s-s'.$
 This implies that
 \begin{align*} \mathcal{SQ}(A)+ \frac{\mathcal{ZR}(A)}{2}&=  \frac{1}{2q} \sum_{s,s'\in \mathbb F_q}
 \sum_{x, y\in A: ||x-y||=(s-s')^2} 1\\
 &= \frac{1}{2q} \sum_{\substack{(x,s), (y, s')\in A\times \mathbb F_q:\\
 ||(x,s)- (y, s')||_{C_{d+1}}=0}} 1. \end{align*}
 Put $E=A\times \mathbb F_q \subset \mathbb F_q^{d+1}$ and $ \widetilde{x}=(x,s), \widetilde{y}=(y, s')\in \mathbb F_q^{d+1}.$ It follows 
 $$\mathcal{SQ}(A)+ \frac{\mathcal{ZR}(A)}{2}= \frac{1}{2q} \sum_{\widetilde{x},\widetilde{y}\in E:\widetilde{x}-\widetilde{y}\in C_{d+1}}1.$$
 We now use Lemma \ref{AlexF} with $n=d+1$ to estimate the above summation. Then we see 
 \begin{equation}\label{MainForm} \mathcal{SQ}(A)+ \frac{\mathcal{ZR}(A)}{2}= \frac{1}{2q} q^{2(d+1)} \sum_{\widetilde{m}\in \mathbb F_q^{d+1}} \widehat{C_{d+1}}(\widetilde{m}) |\widehat{E}(\widetilde{m})|^2.\end{equation}
We replace the above value $\widehat{C_{d+1}}(\widetilde{m})$ by  \eqref{FFC} of Lemma \ref{FTForm}. Then a direct computation gives us the following:
$$\mathcal{SQ}(A)+ \frac{\mathcal{ZR}(A)}{2}=\frac{|E|^2}{2q^2} +\frac{q^{d-1} \eta(-1) G_1^{d+1}}{2} \sum_{\widetilde{m}\in \mathbb F_q^{d+1}}  \sum_{s\ne 0} \eta^{d+1}{(s)} \chi\left(\frac{||\widetilde{m}||_{C_{d+1}}}{-4s}\right)  
|\widehat{E}(\widetilde{m})|^2.$$
Notice that $|E|=|A|q$ and $\widehat{\mathbb F_q}(t)=1$ for $t=0,$ and $0$ otherwise. For $\widetilde{m}=(m,t)\in \mathbb F_q^d\times \mathbb F_q,$ we can check that
$$\widehat{E}(\widetilde{m})=\widehat{A\times \mathbb F_q} (m, t)=\widehat{A}(m) \widehat{\mathbb F_q}(t).$$
Therefore, we obtain
$$\mathcal{SQ}(A)+ \frac{\mathcal{ZR}(A)}{2}=\frac{|A|^2}{2} +\frac{q^{d-1} \eta(-1) G_1^{d+1}}{2} \sum_{m\in \mathbb F_q^{d}}  \sum_{s\ne 0} \eta^{d+1}{(s)} \chi\left(\frac{||m||}{-4s}\right)  
|\widehat{A}(m)|^2.$$
By a simple change of variables, $1/(-4s) \to s,$ and the properties of $\eta$,  
\begin{equation}\label{A+F} 
\mathcal{SQ}(A)+ \frac{\mathcal{ZR}(A)}{2}=\frac{|A|^2}{2} +\frac{q^{d-1} \eta^d(-1) G_1^{d+1}}{2} \sum_{m\in \mathbb F_q^{d}}  \sum_{s\ne 0} \eta^{d+1}{(s)} \chi\left({s||m||}\right)|\widehat{A}(m)|^2.  
\end{equation}
For better exposition, we will use the following notations.
\begin{definition} Let $A\subset \mathbb F_q^d.$ We define that
$$\Omega^+(A)=\sum_{m\in \mathbb F_q^d: \eta(||m||)=1} |\widehat{A}(m)|^2,$$ 
$$\Omega^-(A)=\sum_{m\in \mathbb F_q^d: \eta(||m||)=-1} |\widehat{A}(m)|^2,$$ 
and
$$\Omega^0(A)=\sum_{m\in \mathbb F_q^d: ||m||=0} |\widehat{A}(m)|^2.$$
\end{definition}
It is clear that $\Omega^+(A), \Omega^-(A)$, and $\Omega^0(A)$ are  non-negative real numbers. In addition, notice that if $A$ is a subset of $\mathbb F_q^d,$ then
\begin{equation}\label{triv} \Omega^0 (A)\ge |\widehat{A}(0,\ldots, 0)|^2 = q^{-2d} |A|^2.\end{equation}

 Recall that the sphere $S_0$ in $\mathbb F_q^d$ with zero radius, centered at the origin, is defined by 
$$ S_0:=\{x\in \mathbb F_q^d: ||x||=0\}.$$ With this notation and the definition of the Fourier transform, we have 
 $$\Omega^0(A)=\sum_{m\in S_0}|\widehat{A}(m)|^2=q^{-d}\sum_{x,y\in A} \widehat{S_0}(x-y).$$ 
By the same argument as in the proof of Lemma \ref{FTForm}, the Fourier transform of $S_0$, denoted by $\widehat{S_0},$ is given as follows (see, for example, \cite{IK10}): for $m\in \mathbb F_q^d,$ 
\begin{equation}\label{S0Fourier} \widehat{S_0}(m)=\frac{\delta_0(m)}{q} + q^{-d-1} \eta^d(-1) G_1^d \sum_{s\ne 0} \eta^d(s) \chi\left(\frac{||m||}{4s}\right).\end{equation}
In particular,  when $d\ge 3$ is odd, one has
$$ \Omega^0(A)=q^{-d-1}|A| +q^{-2d-1}\eta(-1)G_1^{d+1} \sum_{x,y\in A} \eta(||x-y||).$$
Since $\Omega^0(A)$ is a real number and $|G_1|=q^{1/2},$ we see that if $d\ge 3$ is odd, then
$$\Omega^0(A)\le q^{-d-1}|A| +q^{-\frac{3d+1}{2}} |A|^2.$$
One the other hand,  the Plancherel theorem gives the following estimate:
$$ \Omega^0(A) \le \sum_{m\in \mathbb F_q^d} |\widehat{A}(m)|^2 =q^{-d}|A|.$$
In conclusion, for any odd $d\ge 3$, we get
\begin{equation}\label{Omezero}
\Omega^0(A) \le \min\{q^{-d}|A|, ~q^{-d-1}|A| +q^{-\frac{3d+1}{2}} |A|^2\}.
\end{equation}
In odd dimensions $d\ge 3,$ we obtain the following formula.
\begin{proposition}\label{pro4.2} Let $A$ be a set in $\mathbb F_q^d.$
\begin{enumerate}
\item  If  $d\equiv 3 \mod{4}$ and $ q\equiv 3 \mod{4},$ 
then
$$\mathcal{SQ}(A)+ \frac{\mathcal{ZR}(A)}{2}=\frac{|A|^2}{2} -\frac{q^{\frac{3d+1}{2}}}{2}\Omega^0(A)+ \frac{q^{\frac{d-1}{2}}|A|}{2}.$$
\item If $d\equiv 1 \mod{4},$ or $d\equiv 3 \mod{4}$ and $q\equiv 1 \mod{4},$ then
$$\mathcal{SQ}(A)+ \frac{\mathcal{ZR}(A)}{2}=\frac{|A|^2}{2} +\frac{q^{\frac{3d+1}{2}}}{2}\Omega^0(A)- \frac{q^{\frac{d-1}{2}}|A|}{2}.$$
\end{enumerate}
\end{proposition}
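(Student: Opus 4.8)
The plan is to argue directly from the identity \eqref{A+F}, which already expresses $\mathcal{SQ}(A)+\mathcal{ZR}(A)/2$ as $|A|^2/2$ plus a scalar multiple of the weighted character sum $\sum_{m}\sum_{s\ne 0}\eta^{d+1}(s)\,\chi(s\|m\|)\,|\widehat{A}(m)|^2$. The crucial simplification is that $d$ is odd, so $d+1$ is even and hence $\eta^{d+1}(s)=1$ for every $s\ne 0$. This collapses the quadratic-character weight and leaves the plain exponential sum $\sum_{s\ne 0}\chi(s\|m\|)$, which I would evaluate by the orthogonality relation: it equals $q-1$ when $\|m\|=0$ and equals $-1$ when $\|m\|\ne 0$.

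Next I would split the outer sum over $m\in\mathbb F_q^d$ according to whether $\|m\|=0$. The part with $\|m\|=0$ contributes $(q-1)\,\Omega^0(A)$, while the complementary part contributes $-\sum_{m:\|m\|\ne 0}|\widehat{A}(m)|^2$. Using the Plancherel theorem, $\sum_{m}|\widehat{A}(m)|^2=q^{-d}|A|$, so the second piece equals $-\bigl(q^{-d}|A|-\Omega^0(A)\bigr)$. Adding these, the full character sum becomes $q\,\Omega^0(A)-q^{-d}|A|$, and substituting back into \eqref{A+F} yields
$$\mathcal{SQ}(A)+\frac{\mathcal{ZR}(A)}{2}=\frac{|A|^2}{2}+\frac{q^{d-1}\eta^d(-1)G_1^{d+1}}{2}\left(q\,\Omega^0(A)-q^{-d}|A|\right).$$

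It then remains only to evaluate the scalar $\eta^d(-1)G_1^{d+1}$. Since $d$ is odd, $\eta^d(-1)=\eta(-1)$, and I would invoke Corollary \ref{Corm} with $n=d+1$. In case (1), the hypotheses $d\equiv 3\pmod 4$ and $q\equiv 3\pmod 4$ force $d+1\equiv 0\pmod 4$, giving $\eta(-1)G_1^{d+1}=-q^{(d+1)/2}$; in case (2) the hypotheses give either $d+1\equiv 2\pmod 4$, or $d+1\equiv 0\pmod 4$ with $q\equiv 1\pmod 4$, so that $\eta(-1)G_1^{d+1}=q^{(d+1)/2}$. Plugging each value into the displayed formula and distributing $q^{d-1}\cdot q=q^{d}$ over $\Omega^0(A)$ and $q^{d-1}\cdot q^{-d}=q^{-1}$ over $|A|$ produces the exponents $q^{(3d+1)/2}$ and $q^{(d-1)/2}$ with the signs recorded in the statement. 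The argument is essentially bookkeeping with no substantive obstacle; the only point demanding care is the sign of the Gauss-sum factor, namely matching the residue of $d+1$ modulo $4$ (together with $q$ modulo $4$) to the correct branch of Corollary \ref{Corm}, since a slip there would flip the sign of the entire $\Omega^0(A)$ term and interchange the two cases.
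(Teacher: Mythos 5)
Your proposal is correct and follows essentially the same route as the paper: starting from \eqref{A+F}, using that $\eta^{d+1}\equiv 1$ for odd $d$, evaluating $\sum_{s\ne 0}\chi(s\|m\|)$ by orthogonality, invoking Plancherel for $\sum_m|\widehat{A}(m)|^2=q^{-d}|A|$, and finishing with Corollary \ref{Corm} applied to $n=d+1$. The exponent bookkeeping ($q^{d-1}\cdot q\cdot q^{(d+1)/2}=q^{(3d+1)/2}$ and $q^{d-1}\cdot q^{-d}\cdot q^{(d+1)/2}=q^{(d-1)/2}$) and the sign assignments in the two cases all check out.
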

\begin{proof} Since $d\ge 3$ is odd,  the inequality  \eqref{A+F} becomes
$$ \mathcal{SQ}(A)+ \frac{\mathcal{ZR}(A)}{2}=\frac{|A|^2}{2} +\frac{q^{d-1} \eta(-1) G_1^{d+1}}{2} \sum_{m\in \mathbb F_q^{d}}  \sum_{s\ne 0} \chi\left({s||m||}\right)|\widehat{A}(m)|^2. $$ 
By the orthogonality of $\chi$, we compute the sum over $s\ne 0$. Then we get
$$\mathcal{SQ}(A)+ \frac{\mathcal{ZR}(A)}{2}=\frac{|A|^2}{2} 
+\frac{q^{d} \eta(-1) G_1^{d+1}}{2} \sum_{m\in  S_0} |\widehat{A}(m)|^2 -\frac{q^{d-1} \eta(-1) G_1^{d+1}}{2} \sum_{m\in \mathbb F_q^d} |\widehat{A}(m)|^2.$$
Since $\sum\limits_{m\in \mathbb F_q^d} |\widehat{A}(m)|^2=q^{-d}|A|$, we have 
\begin{equation}
\mathcal{SQ}(A)+ \frac{\mathcal{ZR}(A)}{2}=\frac{|A|^2}{2} 
+\frac{q^{d} \eta(-1) G_1^{d+1}}{2} \Omega^0(A) -\frac{\eta(-1) G_1^{d+1}|A|}{2q}. 
\end{equation}
To prove the first part (1) of the proposition, it suffices to show that $\eta(-1)G_1^{d+1}= -q^{\frac{d+1}{2}}.$ Since $d\equiv 3 \mod{4}$ and $ q\equiv 3 \mod{4},$ the equation follows from Corollary \ref{Corm} (1) with $n=d+1.$ 
To prove the second part (2) of the proposition, it is enough to show that $\eta(-1)G_1^{d+1}= q^{\frac{d+1}{2}}.$ Notice that this equation  follows immediately from Corolalry \ref{Corm} (2) with $n=d+1.$
\end{proof}

In even dimensions $d\ge 2,$ we have
\begin{proposition} \label{pro4.3}
Let $A$ be a set in $\mathbb F_q^d.$
\begin{enumerate}
\item If $d\ge 2$ is even, then 
$$\mathcal{SQ}(A)+ \frac{\mathcal{ZR}(A)}{2}=\frac{|A|^2}{2} +\frac{q^{d-1}  G_1^{d+2}}{2} \Omega^+(A)-\frac{q^{d-1}  G_1^{d+2}}{2} \Omega^-(A).$$
\item In particular, if  $d\equiv 2 \mod{4},$ then
$$\mathcal{SQ}(A)+ \frac{\mathcal{ZR}(A)}{2}=\frac{|A|^2}{2} +\frac{q^{\frac{3d}{2}}}{2}\Omega^+(A)-\frac{q^{\frac{3d}{2}}}{2}\Omega^-(A). $$
\end{enumerate}
\end{proposition}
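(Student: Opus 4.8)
The plan is to start from the master identity \eqref{A+F} and simply evaluate the inner character sum. Since $d$ is even, we have $\eta^d(-1)=1$, and the exponent $d+1$ on $\eta$ is odd so that $\eta^{d+1}(s)=\eta(s)$ for every $s\in\mathbb F_q^*$. Thus \eqref{A+F} collapses to
\[
\mathcal{SQ}(A)+ \frac{\mathcal{ZR}(A)}{2}=\frac{|A|^2}{2} +\frac{q^{d-1} G_1^{d+1}}{2} \sum_{m\in \mathbb F_q^{d}} \left( \sum_{s\ne 0} \eta(s)\, \chi(s\|m\|) \right) |\widehat{A}(m)|^2 .
\]

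The key step is to recognize the bracketed sum over $s$ as a Gauss sum and to treat the two cases for $m$ separately. If $\|m\|\ne 0$, then taking $a=\|m\|$ in the identity $G_a=\sum_{s\in\mathbb F_q^*}\eta(s)\chi(as)=\eta(a)G_1$ shows that the inner sum equals $\eta(\|m\|)\,G_1$. If instead $\|m\|=0$, the inner sum reduces to $\sum_{s\ne 0}\eta(s)$, which vanishes because the quadratic character takes the values $+1$ and $-1$ equally often on $\mathbb F_q^*$. Consequently the contribution of the set $\{m:\|m\|=0\}$ drops out entirely, and factoring out $G_1$ leaves $G_1\sum_{m:\|m\|\ne0}\eta(\|m\|)\,|\widehat A(m)|^2$.

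Next I would split this remaining sum according to whether $\|m\|$ is a nonzero square or a non-square, which by the definitions of $\Omega^+(A)$ and $\Omega^-(A)$ gives exactly $G_1\big(\Omega^+(A)-\Omega^-(A)\big)$. Substituting back and collecting the prefactor $\tfrac{q^{d-1}G_1^{d+1}}{2}\cdot G_1=\tfrac{q^{d-1}G_1^{d+2}}{2}$ yields part (1). For part (2), where $d\equiv 2\bmod 4$, I would apply Corollary \ref{Corm}(4) with $n=d+2\equiv 0\bmod 4$ to obtain $G_1^{d+2}=q^{(d+2)/2}$, so that $q^{d-1}G_1^{d+2}=q^{d-1+(d+2)/2}=q^{3d/2}$, which converts the formula of part (1) into the stated identity. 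The computation is essentially routine; the only point requiring genuine care is the vanishing of the $\|m\|=0$ term, and it is exactly this that makes the $\Omega^0$ contribution disappear and renders the even-dimensional formula cleaner than its odd-dimensional counterpart.
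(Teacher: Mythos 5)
Your proof is correct and follows essentially the same route as the paper: starting from \eqref{A+F}, using $\eta^{d}(-1)=1$ and $\eta^{d+1}=\eta$ for even $d$, evaluating the inner sum as the Gauss sum $\eta(\|m\|)G_1$ (with the $\|m\|=0$ terms vanishing), and then invoking Corollary \ref{Corm}(4) with $n=d+2$ for part (2). The only cosmetic difference is that you treat the $\|m\|=0$ case by a separate computation, whereas the paper absorbs it via the convention $\eta(0)=0$.
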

\begin{proof}
Since $d\ge 2$ even,  \eqref{A+F} implies that
$$\mathcal{SQ}(A)+ \frac{\mathcal{ZR}(A)}{2}=\frac{|A|^2}{2} +\frac{q^{d-1}  G_1^{d+1}}{2} \sum_{m\in \mathbb F_q^{d}}  \sum_{s\ne 0} \eta{(s)} \chi\left({s||m||}\right)|\widehat{A}(m)|^2. $$ 
Note that the sum over $s\ne 0$ is $G_1 \eta(||m||).$ Since $\eta(||m||)=0$ for $||m||=0,$ the first statement (1) of the proposition follows. Proposition \ref{pro4.3} (2)  is a  direct consequence of Proposition \ref{pro4.3} (1) since  $G_1^{d+2}=q^{(d+2)/2}$ for $d\equiv 2 \mod{4}.$ 
\end{proof}

\subsection{$\mathcal{ZR}(A)/2$ value} 
Recall that for $A\subset \mathbb F_q^d,$ we have
$$\mathcal{ZR}(A)=\sum_{x,y\in A: ||x-y||=0} 1.$$ 
By applying Lemma \ref{AlexF} with  $n=d$ and $V=S_0,$ we have
$$ \frac{\mathcal{ZR}(A)}{2}= \frac{q^{2d}}{2} \sum_{m\in \mathbb F_q^d} \widehat{S_0}(m) |\widehat{A}(m)|^2.$$
The following equality  can be easily obtained by a direct computation after replacing the above value $\widehat{S_0}(m)$ by the value in \eqref{S0Fourier}.
\begin{equation}\label{defA0}\frac{\mathcal{ZR}(A)}{2}= \frac{|A|^2}{2q}+ \frac{q^{d-1} \eta^d(-1) G_1^d }{2} \sum_{m\in \mathbb F_q^d} 
\left(\sum_{s\ne 0} \eta^d(s) \chi\left(s||m||\right)  \right) |\widehat{A}(m)|^2,\end{equation}
where we also used a simple change of variables, $1/(4s)\to s,$ and the properties of the quadratic character $\eta$ of $\mathbb F_q.$\\

In odd dimensions $d\ge 3,$  the value $\mathcal{ZR}(A)/2$ is written as follows.

\begin{proposition}\label{pro4.4} Let $A$ be a set in $\mathbb F_q^d.$
\begin{enumerate}
\item  If  $d\equiv 3 \mod{4}$ and $ q\equiv 3 \mod{4},$ 
then
$$ \frac{\mathcal{ZR}(A)}{2}=\frac{|A|^2}{2q} -\frac{q^{\frac{3d-1}{2}}}{2}\Omega^+(A)+\frac{q^{\frac{3d-1}{2}}}{2}\Omega^-(A). $$
\item If $d\equiv 1 \mod{4},$ or $d\equiv 3 \mod{4}$ and $q\equiv 1 \mod{4},$ then
$$\frac{\mathcal{ZR}(A)}{2}=\frac{|A|^2}{2q} +\frac{q^{\frac{3d-1}{2}}}{2}\Omega^+(A)-\frac{q^{\frac{3d-1}{2}}}{2}\Omega^-(A).$$
\end{enumerate}
\end{proposition}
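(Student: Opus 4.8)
The plan is to start from the exact identity \eqref{defA0} and to evaluate the inner character sum explicitly, using that $d$ is odd. Since $\eta$ is the quadratic character, $\eta^2$ is trivial, so for odd $d$ one has $\eta^d = \eta$; in particular $\eta^d(-1) = \eta(-1)$ and $\eta^d(s) = \eta(s)$, and the inner sum reduces to $\sum_{s \neq 0} \eta(s)\chi(s\|m\|)$. I would split this according to whether $\|m\| = 0$: when $\|m\| = 0$ the sum is $\sum_{s \neq 0}\eta(s) = 0$, while when $\|m\| \neq 0$ it is exactly the Gauss sum $G_{\|m\|} = \eta(\|m\|)G_1$ recorded in Section 2 via $G_a = \eta(a)G_1$.

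Substituting this back and absorbing the extra factor $G_1$ into $G_1^d$, the identity \eqref{defA0} becomes
$$\frac{\mathcal{ZR}(A)}{2} = \frac{|A|^2}{2q} + \frac{q^{d-1}\,\eta(-1)\,G_1^{d+1}}{2} \sum_{m \in \mathbb F_q^d} \eta(\|m\|)\,|\widehat{A}(m)|^2.$$
Next I would recognize the remaining sum as $\Omega^+(A) - \Omega^-(A)$: terms with $\eta(\|m\|)=1$ contribute to $\Omega^+(A)$, terms with $\eta(\|m\|) = -1$ contribute with a minus sign to $\Omega^-(A)$, and the zero-norm terms drop out because $\eta(0)=0$. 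This yields
$$\frac{\mathcal{ZR}(A)}{2} = \frac{|A|^2}{2q} + \frac{q^{d-1}\,\eta(-1)\,G_1^{d+1}}{2}\left(\Omega^+(A) - \Omega^-(A)\right).$$

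It then remains to evaluate the scalar $q^{d-1}\eta(-1)G_1^{d+1}$, which is where the case split arises. Since $d$ is odd, $n := d+1$ is even, so Corollary \ref{Corm} applies. In case (1), the hypotheses $d \equiv 3 \pmod 4$ and $q \equiv 3 \pmod 4$ give $n \equiv 0 \pmod 4$ with $q \equiv 3 \pmod 4$, so part (1) of Corollary \ref{Corm} yields $\eta(-1)G_1^{d+1} = -q^{(d+1)/2}$; in case (2), both subcases ($d \equiv 1$, or $d \equiv 3$ with $q \equiv 1$) fall under part (2) of Corollary \ref{Corm} and give $\eta(-1)G_1^{d+1} = +q^{(d+1)/2}$. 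Combining exponents through $d - 1 + \tfrac{d+1}{2} = \tfrac{3d-1}{2}$ produces the coefficient $\mp\,\tfrac{q^{(3d-1)/2}}{2}$ in front of $\Omega^+(A) - \Omega^-(A)$, matching the two stated formulas.

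The argument is essentially bookkeeping, and I do not anticipate a genuine obstacle. The only point requiring care is the correct application of Corollary \ref{Corm} with $n = d+1$, namely reading off the residue of $d+1$ modulo $4$ from that of $d$ and tracking the sign of the Gauss-sum power, together with the exponent arithmetic that collapses $q^{d-1}\cdot q^{(d+1)/2}$ into $q^{(3d-1)/2}$.
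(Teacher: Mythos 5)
Your proposal is correct and follows essentially the same route as the paper: reduce $\eta^d$ to $\eta$ for odd $d$, recognize the inner sum over $s\ne 0$ as the Gauss sum $\eta(\|m\|)G_1$, split the resulting sum into $\Omega^+(A)-\Omega^-(A)$ using $\eta(0)=0$, and evaluate $\eta(-1)G_1^{d+1}$ via Corollary \ref{Corm} with $n=d+1$. The case analysis and the exponent bookkeeping $q^{d-1}\cdot q^{(d+1)/2}=q^{(3d-1)/2}$ both match the paper's argument exactly.
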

\begin{proof} Since $d$ is odd,  $\eta^d=\eta$ and so 
the equality \eqref{defA0} becomes
\begin{align*} \frac{\mathcal{ZR}(A)}{2}&= \frac{|A|^2}{2q}+ \frac{q^{d-1} \eta(-1) G_1^d }{2} \sum_{m\in \mathbb F_q^d} 
\left(\sum_{s\ne 0} \eta(s) \chi\left(s||m||\right)  \right) |\widehat{A}(m)|^2\\
&=\frac{|A|^2}{2q}+ \frac{q^{d-1} \eta(-1) G_1^{d+1} }{2} \sum_{m\in \mathbb F_q^d} 
 \eta(||m||) |\widehat{A}(m)|^2.\end{align*}
Since $\eta{(||m||)}=0$ for $||m||=0,$  it follows by the definitions of $\Omega^+(A)$ and $\Omega^-(A)$ that 
$$ \frac{\mathcal{ZR}(A)}{2} = \frac{|A|^2}{2q}+ \frac{q^{d-1} \eta(-1) G_1^{d+1} }{2} \Omega^+(A) - \frac{q^{d-1} \eta(-1) G_1^{d+1} }{2} \Omega^-(A).$$
Applying  Corollary \ref{Corm} (1) and (2) with $n=d+1$,  we complete the proof.
\end{proof}

In even dimensions $d\ge 2,$ we have
\begin{proposition} \label{pro4.5}
Let $A$ be a set in $\mathbb F_q^d.$
\begin{enumerate}
\item  If  $d\equiv 2 \mod{4}$ and $ q\equiv 3 \mod{4},$ then
$$ \frac{\mathcal{ZR}(A)}{2}= \frac{|A|^2}{2q} -\frac{q^{\frac{3d}{2}}}{2} 
\Omega^0(A)+ \frac{q^{\frac{d-2}{2}}|A|}{2}.$$

\item If $d\equiv 0 \mod{4},$ or $d\equiv 2 \mod{4}$ and $q\equiv 1 \mod{4},$ then
$$\frac{\mathcal{ZR}(A)}{2}=\frac{|A|^2}{2q} +\frac{q^{\frac{3d}{2}}}{2}\Omega^0(A)- \frac{q^{\frac{d-2}{2}}|A|}{2}.$$
\end{enumerate}
\end{proposition}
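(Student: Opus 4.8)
The plan is to mirror the proof of Proposition \ref{pro4.4}, starting from the identity \eqref{defA0}, but now exploiting the parity of $d$ in the opposite way: here $d$ is even. Because $d$ is even, $\eta^d$ restricted to $\mathbb{F}_q^*$ is the trivial character, so $\eta^d(s)=1$ for every $s\ne 0$, and likewise $\eta^d(-1)=1$. Substituting these into \eqref{defA0} collapses it to
$$\frac{\mathcal{ZR}(A)}{2}= \frac{|A|^2}{2q}+ \frac{q^{d-1} G_1^d }{2} \sum_{m\in \mathbb F_q^d} \left(\sum_{s\ne 0} \chi\left(s\|m\|\right)\right) |\widehat{A}(m)|^2.$$
This is the key structural departure from the odd-dimensional case: there the inner sum $\sum_{s\ne 0}\eta(s)\chi(s\|m\|)$ was a Gauss sum proportional to $\eta(\|m\|)$, which split the spectrum into the $\Omega^+$ and $\Omega^-$ parts, whereas here the quadratic character has disappeared and only the zero-sphere contribution $\Omega^0$ will survive.

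Next I would evaluate $\sum_{s\ne 0}\chi(s\|m\|)$ by orthogonality of $\chi$. When $\|m\|=0$ each summand equals $1$, giving $q-1$; when $\|m\|\ne 0$ the full sum over $s\in\mathbb{F}_q$ vanishes, leaving $-1$ after removing the $s=0$ term. I would then split the outer sum over $m$ according to whether $\|m\|=0$, recognising $\sum_{m:\,\|m\|=0}|\widehat{A}(m)|^2=\Omega^0(A)$ and, by the Plancherel theorem, $\sum_{m:\,\|m\|\ne 0}|\widehat{A}(m)|^2=q^{-d}|A|-\Omega^0(A)$. Combining $(q-1)\Omega^0(A)-(q^{-d}|A|-\Omega^0(A))$ shows the double sum equals $q\,\Omega^0(A)-q^{-d}|A|$, so that
$$\frac{\mathcal{ZR}(A)}{2}= \frac{|A|^2}{2q}+ \frac{q^{d} G_1^d }{2}\,\Omega^0(A) - \frac{G_1^d}{2q}\,|A|.$$

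Finally I would substitute the explicit value of $G_1^d$ from Corollary \ref{Corm} with $n=d$, handling the two subcases separately. For part (1), where $d\equiv 2 \mod{4}$ and $q\equiv 3 \mod{4}$, Corollary \ref{Corm}(3) gives $G_1^d=-q^{d/2}$, producing the coefficient $-q^{3d/2}/2$ on $\Omega^0(A)$ and the term $+q^{(d-2)/2}|A|/2$. For part (2), where $d\equiv 0 \mod{4}$, or $d\equiv 2 \mod{4}$ and $q\equiv 1 \mod{4}$, Corollary \ref{Corm}(4) gives $G_1^d=q^{d/2}$, which flips both signs and yields the stated formula. The argument is entirely routine; the only place demanding care is the bookkeeping of the exponents of $q$ together with the sign of $G_1^d$ across the two subcases, so I would keep the two invocations of Corollary \ref{Corm} cleanly separated.
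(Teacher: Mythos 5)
Your proposal is correct and follows essentially the same route as the paper: starting from \eqref{defA0}, using that $\eta^d\equiv 1$ for even $d$, evaluating $\sum_{s\ne 0}\chi(s\|m\|)$ by orthogonality together with the Plancherel identity $\sum_{m}|\widehat{A}(m)|^2=q^{-d}|A|$, and finishing with Corollary \ref{Corm} (3) and (4) for the sign of $G_1^d$. The sign and exponent bookkeeping in both subcases checks out.
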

\begin{proof}
Since $d$ is even, $\eta^d\equiv 1.$ Hence, the equality \eqref{defA0} and the orthogonality of $\chi$ yield 
\begin{align*}\frac{\mathcal{ZR}(A)}{2}&= \frac{|A|^2}{2q}+ \frac{q^{d-1}  G_1^d }{2} \sum_{m\in \mathbb F_q^d} 
\left(\sum_{s\ne 0}  \chi\left(s||m||\right)  \right) |\widehat{A}(m)|^2\\
&=\frac{|A|^2}{2q}+ \frac{q^{d}  G_1^d }{2}\Omega^0(A) -\frac{q^{d-1}  G_1^d }{2} \sum_{m\in \mathbb F_q^d} |\widehat{A}(m)|^2.\end{align*} 
Notice by the Plancherel theorem that $\sum\limits_{m\in \mathbb F_q^d} |\widehat{A}(m)|^2=q^{-d}|A|.$ Then the proposition follows from the Gauss sum values given in  Corollary \ref{Corm} (3) and (4) with $n=d.$
\end{proof}

\section{Proofs of results on $\mathcal{SQ}(A)+\mathcal{ZR}(A)$ (Theorem \ref{mainN0})}
To get a good upper bound of $\mathcal{SQ}(A)+\mathcal{ZR}(A)$,  we will use the previously established estimates of $\mathcal{SQ}(A)+ \mathcal{ZR}(A)/2$ and $\mathcal{ZR}(A).$ In other words,  we will find an upper bound of the sum of $(\mathcal{SQ}(A)+\mathcal{ZR}(A)/2)$ and $\mathcal{ZR}(A)/2.$
 
 \subsection{ Proof of Theorem \ref{mainN0} (1)}
 Since we assume that $d\equiv 3 \mod{4}$ and $ q\equiv 3 \mod{4},$ Propositions \ref{pro4.2} (1) and \ref{pro4.4} (1) play a key role in proving this theorem. \\
By inequality \eqref{triv} and Proposition \ref{pro4.2} (1), we see that
\begin{equation}\label{TK1}
(\mathcal{SQ}(A)+ \mathcal{ZR}(A)/2)\le \frac{|A|^2}{2} -\frac{|A|^2}{2q^{\frac{d-1}{2}}}+ \frac{q^{\frac{d-1}{2}}|A|}{2}.
\end{equation}  
Ignoring a negative term, Proposition \ref{pro4.4} (1) implies that 
$$\frac{\mathcal{ZR}(A)}{2}\le \frac{|A|^2}{2q} +\frac{q^{\frac{3d-1}{2}}}{2}\Omega^-(A). $$
One can check that
$$\Omega^{-}(A)\le \sum_{m\in \mathbb F_q^d} |\widehat{A}(m)|^2 -\Omega^0(A) \le q^{-d}|A|-|\widehat{A}(0,\ldots, 0)|^2=q^{-d}|A|-q^{-2d}|A|^2.$$ 
Thus, we obtain
$$\frac{\mathcal{ZR}(A)}{2}\le \frac{|A|^2}{2q} +\frac{q^{\frac{d-1}{2}}|A|}{2} - \frac{|A|^2}{2q^{\frac{d+1}{2}}}.$$

Since $\mathcal{SQ}(A)+ \mathcal{ZR}(A) =(\mathcal{SQ}(A)+ \mathcal{ZR}(A)/2)+ {\mathcal{ZR}(A)}/{2},$ we complete the proof by adding the above two inequalities. \qed\\

\subsection{Proof of Theorem \ref{mainN0} (2)}
Since $d\equiv 1 \mod{4},$ or $d\equiv 3 \mod{4}$ and $q\equiv 1 \mod{4},$ we are able to use the following result of Proposition \ref{pro4.2} (2):
$$\mathcal{SQ}(A)+ \frac{\mathcal{ZR}(A)}{2}=\frac{|A|^2}{2} +\frac{q^{\frac{3d+1}{2}}}{2}\Omega^0(A)- \frac{q^{\frac{d-1}{2}}|A|}{2}.$$
Since $\Omega^0(A)$ is dominated by $\sum\limits_{m\in \mathbb F_q^d} |\widehat{A}(m)|^2 - \Omega^+(A)=q^{-d}|A|-\Omega^+(A),$  it follows that
$$\mathcal{SQ}(A)+ \frac{\mathcal{ZR}(A)}{2}\le \frac{|A|^2}{2} +\frac{q^{\frac{3d+1}{2}}}{2}(q^{-d}|A|-\Omega^+(A))- \frac{q^{\frac{d-1}{2}}|A|}{2}. $$

Proposition \ref{pro4.4} (2) implies that
$$\frac{\mathcal{ZR}(A)}{2}\le \frac{|A|^2}{2q} +\frac{q^{\frac{3d-1}{2}}}{2}\Omega^+(A).$$
Adding the above two estimates, we have
 $$\mathcal{SQ}(A)+\mathcal{ZR}(A) \le \frac{|A|^2}{2} +\frac{|A|^2}{2q} + \frac{q^{\frac{d+1}{2}} |A|}{2} -\frac{q^{\frac{d-1}{2}} |A|}{2}-\left( \frac{q^{\frac{3d+1}{2}}}{2}- \frac{q^{\frac{3d-1}{2}}}{2}\right)\Omega^+(A).$$
Since the term containing $\Omega^+(A)$ value is negative or zero, the theorem follows. \qed\\
\subsection{Proof of Theorem \ref{mainN0} (3)}
 Since $d\equiv 2 \mod{4}$ and $ q\equiv 3 \mod{4},$  the proof can use Propositions \ref{pro4.3} (2) and \ref{pro4.5} (1). \\
 Since $-\Omega^-(A)\le 0,$ Proposition \ref{pro4.3} (2) implies that
 \begin{equation}\label{ThangDoowon3}\mathcal{SQ}(A)+ \frac{\mathcal{ZR}(A)}{2}\le \frac{|A|^2}{2} +\frac{q^{\frac{3d}{2}}}{2}\Omega^+(A). \end{equation} 
 Combining this estimate with the fact that $\Omega^+(A) \le q^{-d}|A|-\Omega^0(A)$, we obtain
 $$\mathcal{SQ}(A)+ \frac{\mathcal{ZR}(A)}{2}\le \frac{|A|^2}{2} +\frac{q^{\frac{3d}{2}}}{2} (q^{-d}|A|-\Omega^0(A)).$$
By Proposition \ref{pro4.5} (1), we have
 $$\frac{\mathcal{ZR}(A)}{2}= \frac{|A|^2}{2q} -\frac{q^{\frac{3d}{2}}}{2} 
\Omega^0(A)+ \frac{q^{\frac{d-2}{2}}|A|}{2}.$$
By adding the above two inequalities, we have
 $$\mathcal{SQ}(A)+\mathcal{ZR}(A) \le \frac{|A|^2}{2} +\frac{|A|^2}{2q}-q^{\frac{3d}{2}} 
\Omega^0(A)+ \frac{q^{\frac{d}{2}} |A|}{2} +\frac{q^{\frac{d-2}{2}}|A|}{2}.$$ 
Since $-\Omega^0(A)\le -|\widehat{A}(0,\ldots,0)|^2=-q^{-2d}|A|^2,$  we obtain the statement of the theorem. \qed\\
\subsection{Proof of Theorem \ref{mainN0} (4)}
Since $d\equiv 0 \mod{4},$ or $d\equiv 2 \mod{4}$ and $q\equiv 1 \mod{4},$ we may invoke Proposition \ref{pro4.3} (1) and \ref{pro4.5} (2). \\ 
First, Proposition \ref{pro4.3} (1) tells us that for even integer $d\ge 2,$
$$\mathcal{SQ}(A)+ \frac{\mathcal{ZR}(A)}{2}=\frac{|A|^2}{2} +\frac{q^{d-1}  G_1^{d+2}}{2} \Omega^+(A)-\frac{q^{d-1}  G_1^{d+2}}{2} \Omega^-(A):=\frac{|A|^2}{2}+ I + II.$$
Since $d$ is  even,  Lemma \ref{ExplicitGauss} implies that $G_1^{d+2}=\pm q^{(d+2)/2}$ which is a real number. Moreover, both $ \Omega^+(A)$ and $\Omega^-(A)$ are non-negative real numbers. Hence, 
one of  $I$ and $II$ is exactly non-negative and the other is non-positive. Therefore, in the case when  $G_1^{d+2}= q^{(d+2)/2},$ we have 
\begin{equation}\label{asseq1} \mathcal{SQ}(A)+ \frac{\mathcal{ZR}(A)}{2}\le \frac{|A|^2}{2}+ I=\frac{|A|^2}{2}+\frac{q^{\frac{3d}{2}}}{2} \Omega^+(A).\end{equation}
On the other hand, when  $G_1^{d+2}= -q^{(d+2)/2},$ we have  
\begin{equation}\label{asseq2}\mathcal{SQ}(A)+ \frac{\mathcal{ZR}(A)}{2}\le\frac{|A|^2}{2}+ II=\frac{|A|^2}{2}+\frac{q^{\frac{3d}{2}}}{2} \Omega^-(A).\end{equation}
In any case, we have
$$\mathcal{SQ}(A)+ \frac{\mathcal{ZR}(A)}{2}\le\frac{|A|^2}{2}+\frac{q^{\frac{3d}{2}}}{2} (q^{-d} |A|-\Omega^0(A)),$$
since $\max\{\Omega^+(A), ~\Omega^-(A)\} \le  \sum\limits_{m\in \mathbb F_q^d} |\widehat{A}(m)|^2 -\Omega^0(A) = q^{-d}|A|-\Omega^0(A).$\\
By Proposition \ref{pro4.5} (2), we have
$$\frac{\mathcal{ZR}(A)}{2}=\frac{|A|^2}{2q} +\frac{q^{\frac{3d}{2}}}{2}\Omega^0(A)- \frac{q^{\frac{d-2}{2}}|A|}{2}.$$
Hence, adding the above two estimates, we obtain the required upper bound of $\mathcal{SQ}(A)+ \mathcal{ZR}(A).$ \qed\\
\section{Proofs of results on $\mathcal{SQ}(A)$ in odd dimensions (Theorem \ref{mainNOdd})}

\subsection{Proof of Theorem \ref{mainNOdd} (1)}
Since the assumtion that $d\equiv 3 \mod{4}$ and $ q\equiv 3 \mod{4}$ is the same as that of Theorem
\ref{mainN0} (1), we will make use certain estimates given in the proof of Theorem \ref{mainN0} (1).\\
As in \eqref{TK1}, we have
\begin{equation}\label{TK11}
    (\mathcal{SQ}(A)+ \mathcal{ZR}(A)/2)\le \frac{|A|^2}{2} -\frac{|A|^2}{2q^{\frac{d-1}{2}}}+ \frac{q^{\frac{d-1}{2}}|A|}{2}.
\end{equation}
Since $\Omega^-(A)$ is non-negative, Proposition \ref{pro4.4} (1) implies that
$$ -\mathcal{ZR}(A)/2\le  -\frac{|A|^2}{2q} +\frac{q^{\frac{3d-1}{2}}}{2} \Omega^+(A).  $$

As before, we can check that $\Omega^+(A)$ is bounded by   $q^{-d}|A|-q^{-2d} |A|^2$ . Thus, we have
\begin{equation} \label{TK2}
-\mathcal{ZR}(A)/2\le  -\frac{|A|^2}{2q} +\frac{q^{\frac{3d-1}{2}}}{2}(q^{-d}|A|-q^{-2d} |A|^2)= -\frac{|A|^2}{2q}+\frac{q^{\frac{d-1}{2}}|A|}{2}-\frac{|A|^2}{2q^{\frac{d+1}{2}}}.
\end{equation}

By the definition of $\mathcal{ZR}(A),$ it is clear that $\mathcal{ZR}(A)\ge |A|.$ Hence, we also have
\begin{equation}\label{TK3} -\mathcal{ZR}(A)/2\le -|A|/2.\end{equation}

Since we can write $\mathcal{SQ}(A)= (\mathcal{SQ}(A)+ \mathcal{ZR}(A)/2) -\mathcal{ZR}(A)/2,$ the estimates \eqref{TK11} and \eqref{TK2} imply that
$$\mathcal{SQ}(A)\le \frac{|A|^2}{2} + q^{\frac{d-1}{2}} |A| -\frac{|A|^2}{2q}- \frac{|A|^2}{2 q^{\frac{d-1}{2}}} - \frac{|A|^2}{2 q^{\frac{d+1}{2}}}.$$

Combining \eqref{TK11} and \eqref{TK3}, we also obtain that
$$ \mathcal{SQ}(A) \le \frac{|A|^2}{2} + \frac{q^{\frac{d-1}{2}} |A|}{2}
  - \frac{|A|^2}{2 q^{\frac{d-1}{2}}} -\frac{|A|}{2}.$$
  
Comparing the above two estimates of $\mathcal{SQ}(A),$ we obtain the statement of Theorem \ref{mainNOdd} (1).
\qed \\

\subsection{Proof of Theorem \ref{mainNOdd} (2)}
Since  $d\equiv 1 \mod{4},$ or $d\equiv 3 \mod{4}$ and $q\equiv 1 \mod{4},$  we will make use of Propositions \ref{pro4.2} (2) and \ref{pro4.4} (2).\\
Combining inequality \eqref{Omezero} and Proposition \ref{pro4.2} (2), we see that
$$\mathcal{SQ}(A)+ \frac{\mathcal{ZR}(A)}{2}\le\frac{|A|^2}{2} +\frac{q^{\frac{3d+1}{2}}}{2}\min\{q^{-d}|A|, ~q^{-d-1}|A| +q^{-\frac{3d+1}{2}} |A|^2\}- \frac{q^{\frac{d-1}{2}}|A|}{2}.$$
 As seen in \eqref{TK3},  we have
 $$  -\mathcal{ZR}(A)/2\le -|A|/2.$$
 Hence, the above two estimates imply that
\begin{equation}\label{OE1} 
\mathcal{SQ}(A) \le \frac{|A|^2}{2} - \frac{q^{\frac{d-1}{2}} |A|}{2}
  -\frac{|A|}{2}+ \min\left\{ \frac{q^{\frac{d+1}{2}} |A|}{2},~ \frac{q^{\frac{d-1}{2}} |A|}{2}+ \frac{|A|^2}{2} \right\}.
\end{equation} 
Next, we deduce another upper bound of $\mathcal{SQ}(A).$ 
Since $\Omega^0(A)=\sum\limits_{m\in \mathbb F_q^d} |\widehat{A}(m)|^2 -\Omega^+(A)-\Omega^-(A) =q^{-d}|A|-\Omega^+(A)-\Omega^-(A)\le q^{-d}|A|-\Omega^-(A),$ it follows by Proposition \ref{pro4.2} (2) that
\begin{equation}\label{Doowon2}\mathcal{SQ}(A)+ \frac{\mathcal{ZR}(A)}{2}\le \frac{|A|^2}{2} +\frac{q^{\frac{3d+1}{2}}}{2}(q^{-d}|A|-\Omega^-(A))- \frac{q^{\frac{d-1}{2}}|A|}{2}.\end{equation}
Since $-\Omega^+(A)\le 0,$  Proposition \ref{pro4.4} (2) implies that 
$$-\frac{\mathcal{ZR}(A)}{2}\le -\frac{|A|^2}{2q} +\frac{q^{\frac{3d-1}{2}}}{2}\Omega^-(A).$$
Since $-\Omega^-(A)\le 0,$ adding the above two estimates gives us 
$$\mathcal{SQ}(A)\le \frac{|A|^2}{2} + \frac{q^{\frac{d+1}{2}} |A|}{2} -\frac{q^{\frac{d-1}{2}} |A|}{2}-\frac{|A|^2}{2q}.$$
Combining this estimate with inequality \eqref{OE1}, we complete the proof of Theorem \ref{mainNOdd} (2).   
\qed
\\
\section{Proofs of results on $\mathcal{SQ}(A)$ in even dimensions (Theorem \ref{mainNEven})}
We begin by proving an upper bound of $\mathcal{SQ}(A)$ for even $d\ge 2.$ We will use some certain estimates appearing in the proof of Theorem \ref{mainN0}(4).
\begin{lemma}\label{evenA+}
Let $A$ be a subset of $\mathbb F_q^d.$ If $d\ge 2 $ is even, then we have
$$ \mathcal{SQ}(A)\le \frac{|A|^2}{2} + \frac{q^{\frac{d}{2}}  }{2}|A|- \frac{|A|^2}{2q^{\frac{d}{2}}  }-\frac{ |A| }{2}.$$
\end{lemma}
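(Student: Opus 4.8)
The plan is to combine the estimate for $\mathcal{SQ}(A)+\mathcal{ZR}(A)/2$ that was already extracted in the proof of Theorem \ref{mainN0}(4) with the trivial lower bound $\mathcal{ZR}(A)\ge |A|$, exploiting the decomposition $\mathcal{SQ}(A)=(\mathcal{SQ}(A)+\mathcal{ZR}(A)/2)-\mathcal{ZR}(A)/2$.

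First I would recall that for even $d\ge 2$, Proposition \ref{pro4.3}(1) gives
$$\mathcal{SQ}(A)+\frac{\mathcal{ZR}(A)}{2}=\frac{|A|^2}{2}+\frac{q^{d-1}G_1^{d+2}}{2}\Omega^+(A)-\frac{q^{d-1}G_1^{d+2}}{2}\Omega^-(A),$$
and that $G_1^{d+2}=\pm q^{(d+2)/2}$ is real, so exactly one of the last two terms is non-positive. Discarding that term and using Plancherel in the form $\max\{\Omega^+(A),\Omega^-(A)\}\le q^{-d}|A|-\Omega^0(A)$, I obtain, precisely as in the proof of Theorem \ref{mainN0}(4) and uniformly in the residues of $d$ and $q$ modulo $4$,
$$\mathcal{SQ}(A)+\frac{\mathcal{ZR}(A)}{2}\le\frac{|A|^2}{2}+\frac{q^{\frac{3d}{2}}}{2}\left(q^{-d}|A|-\Omega^0(A)\right).$$

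Next I would invoke the trivial estimate \eqref{triv}, namely $\Omega^0(A)\ge q^{-2d}|A|^2$, which simplifies the positive term via $q^{\frac{3d}{2}-d}=q^{\frac{d}{2}}$ and converts the $\Omega^0$ term into $-|A|^2/(2q^{d/2})$, yielding
$$\mathcal{SQ}(A)+\frac{\mathcal{ZR}(A)}{2}\le\frac{|A|^2}{2}+\frac{q^{\frac{d}{2}}}{2}|A|-\frac{|A|^2}{2q^{\frac{d}{2}}}.$$

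Finally, since each point $x\in A$ contributes the diagonal pair $(x,x)$ of zero distance, one has $\mathcal{ZR}(A)\ge|A|$, hence $-\mathcal{ZR}(A)/2\le-|A|/2$. Adding this to the previous display through $\mathcal{SQ}(A)=(\mathcal{SQ}(A)+\mathcal{ZR}(A)/2)-\mathcal{ZR}(A)/2$ gives exactly the claimed bound. There is no genuine obstacle here; the whole argument is bookkeeping layered on the spectral identities already proved, and the only point needing care is handling the sign of $G_1^{d+2}$ uniformly over all even $d$, which is precisely why only $\max\{\Omega^+(A),\Omega^-(A)\}$ enters and the estimate is insensitive to the residues modulo $4$.
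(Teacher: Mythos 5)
Your proposal is correct and follows essentially the same route as the paper: it discards whichever of the $\Omega^{\pm}(A)$ terms in Proposition \ref{pro4.3}(1) is non-positive, bounds the surviving one by $q^{-d}|A|-\Omega^0(A)\le q^{-d}|A|-q^{-2d}|A|^2$ to recover \eqref{evenEasy}, and then subtracts $\mathcal{ZR}(A)/2\ge |A|/2$. The only difference from the paper's write-up is the (immaterial) order in which the Plancherel bound and the trivial bound \eqref{triv} on $\Omega^0(A)$ are applied.
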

\begin{proof}
As observed in the proof of Theorem \ref{mainN0} (4), either the inequality \eqref{asseq1} or the inequality \eqref{asseq2} happens. Without loss of generality, we can assume that  the inequality \eqref{asseq1} holds. By the same method, we can deal with the other case.  As seen before, we have
$\Omega^+(A)\le \sum_{m\in \mathbb F_q^d} |\widehat{A}(m)|^2 -\Omega^0(A)\le q^{-d}|A|-q^{-2d}|A|^2$.
Thus, we obtain by \eqref{asseq1} that for even $d\ge 2,$
\begin{equation}\label{evenEasy}\mathcal{SQ}(A)+ \frac{\mathcal{ZR}(A)}{2} \le \frac{|A|^2}{2}+\frac{q^{\frac{3d}{2}}}{2} ( q^{-d}|A|-q^{-2d}|A|^2 )=\frac{|A|^2}{2} + \frac{q^{\frac{d}{2}}  }{2}|A|- \frac{|A|^2}{2q^{\frac{d}{2}}}.\end{equation}
Since $\mathcal{ZR}(A)\ge|A|$ by the definition of $\mathcal{ZR}(A),$ we have
$$ -\frac{\mathcal{ZR}(A)}{2} \le -\frac{|A|}{2}.$$
 The lemma follows by adding the above two estimates. 
\end{proof}

\subsection{Proof of Theorem \ref{mainNEven} (1)}
Since  $d\equiv 2 \mod{4}$ and $ q\equiv 3 \mod{4},$ we can also use certain estimates which were already established in the proof of Theorem \ref{mainN0} (3). As seen in \eqref{ThangDoowon3}, we have
$$\mathcal{SQ}(A)+ \frac{\mathcal{ZR}(A)}{2}\le \frac{|A|^2}{2} +\frac{q^{\frac{3d}{2}}}{2}\Omega^+(A).$$
 Using the fact that $\Omega^+(A)\le q^{-d}|A|-\Omega^0{A},$  we obtain
$$\mathcal{SQ}(A)+ \frac{\mathcal{ZR}(A)}{2}\le \frac{|A|^2}{2}+ \frac{q^{\frac{3d}{2}}}{2}(q^{-d}|A|-\Omega^0{A})$$
By Proposition \ref{pro4.5} (1), it follows that 
 $$ -\frac{\mathcal{ZR}(A)}{2}= -\frac{|A|^2}{2q} +\frac{q^{\frac{3d}{2}}}{2} 
\Omega^0(A)- \frac{q^{\frac{d-2}{2}}|A|}{2}.$$

Adding the above two estimates gives the statement of the theorem. \qed\\

\subsection{ Proof of Theorem \ref{mainNEven} (2)}
Since $d\equiv 0 \mod{4},$ or $d\equiv 2 \mod{4}$ and $q\equiv 1 \mod{4},$ we will use the estimate \eqref{evenEasy} and Proposition \ref{pro4.5} (2).\\
Since $d$ is even, it follows by \eqref{evenEasy} that
 \begin{equation}\label{ThangKoh4}\mathcal{SQ}(A)+ \frac{\mathcal{ZR}(A)}{2} \le \frac{|A|^2}{2} + \frac{q^{\frac{d}{2}}  }{2}|A|- \frac{|A|^2}{2q^{\frac{d}{2}}}.\end{equation}

Since $-\Omega^0(A) \le -|\widehat{A}(0,\ldots, 0)|^2 = -q^{-2d}|A|^2,$ Proposition \ref{pro4.5} (2) implies that
$$-\frac{\mathcal{ZR}(A)}{2}\le -\frac{|A|^2}{2q} -\frac{|A|^2}{2q^{\frac{d}{2}}}+ \frac{q^{\frac{d-2}{2}}|A|}{2}.$$
Thus, adding the above two estimates gives us an upper bound of $\mathcal{SQ}(A)$ as follows: 
$$  \mathcal{SQ}(A)\le \frac{|A|^2}{2} + \frac{q^{\frac{d}{2}} |A|}{2}-\frac{|A|^2}{q^{\frac{d}{2}}} -\frac{|A|^2}{2q}+ \frac{q^{\frac{d-2}{2}}|A|}{2}.$$ 

Lemma \ref{evenA+} also gives the following upper bound of $\mathcal{SQ}(A)$:
$$ \mathcal{SQ}(A)\le \frac{|A|^2}{2} + \frac{q^{\frac{d}{2}}  }{2}|A|- \frac{|A|^2}{2q^{\frac{d}{2}}  }-\frac{ |A| }{2}.$$
The statement of the theorem follows by a direct comparison with two upper bounds of $\mathcal{SQ}(A).$ \qed\\

\section*{Acknowledgments}
Doowon Koh was supported by the National Research Foundation of Korea (NRF) grant funded by the Korea government (MIST) (No. NRF-2018R1D1A1B07044469). T. Pham would like to thank to the VIASM for the hospitality and for the excellent working condition.

\end{document}